\newtheorem{theorem}{Theorem}[section]
\newtheorem{corollary}{Corollary}
\newtheorem{lemma}[theorem]{Lemma}
\theoremstyle{definition}
\newtheorem{definition}[theorem]{Definition}
\newtheorem{remark}{Remark}
\newcommand{\ep}{\varepsilon}
\DeclareMathOperator{\conv}{conv} 
\DeclareMathOperator{\dju}{\dot\cup}
\DeclareMathOperator{\divergence}{div} 
\DeclareMathOperator{\essinf}{ess\,inf} 
\DeclareMathOperator{\grad}{\nabla} 
\DeclareMathOperator{\id}{Id}
\DeclareMathOperator{\interior}{int} 
\DeclareMathOperator{\nb}{nb}
\DeclareMathOperator{\ph}{ph} 
\DeclareMathOperator{\var}{var}
\newcommand{\aaref}[1]{{\rm (AA}-{\rm \ref{#1})}}
\newcommand{\aref}[1]{{\rm (A}-{\rm \ref{#1})}}
\newcommand{\daref}[1]{{\rm (DA}-{\rm \ref{#1})}}
\newcommand{\emaref}[1]{{\em (A}-{\em \ref{#1})}}
\newcommand{\emaaref}[1]{{\em (AA}-{\em \ref{#1})}}
\newcommand{\emdaref}[1]{{\em (DA}-{\em \ref{#1})}}
\newcommand{\dreg}{\partial_{{\rm reg}}}
\newcommand{\mch}{\mathcal H_{\nu,i}}
\newcommand{\myb}{b_{\nu,i}}
\newcommand{\partt}[1]{\frac{\partial #1}{\partial t}}
\newcommand{\q}{\quad}
\newcommand{\res}[1]{\hspace{-3pt}\upharpoonright_{#1}}
\newcommand{\tildeg}{{\tilde g}_{\nu,i}}
\newcommand{\tildeh}{{\tilde h}_{i}}
\newcommand{\vecmax}[1]{\left\|#1\right\|_{\max}}
\newcommand{\vecmin}[1]{\left\|#1\right\|_{\min}}
\renewcommand{\vec}[1]{\mathbf #1}
\newcounter{philipenumi}
\newcounter{philipenumiaa}
\newcounter{philipenumida}
\title[Implicit Scheme for Radiative Heat Transfer]
      {Fully Implicit Finite Volume Scheme for Transient Conductive-Radiative Heat Transfer: 
  Discrete Existence, Uniqueness, Maximum Principle}
\author[Peter Philip]{}
\subjclass{Primary: 45K05, 65M08, 65M22; Secondary: 35K05, 35K55, 80M15.}
 \keywords{Integro-partial differential equations,
  Finite volume method, Nonlinear parabolic PDE,
  Integral operators,
  Nonlocal interface conditions,
  Diffuse-gray radiation.}
 \email{philip@math.lmu.de}
\begin{document}
\maketitle

\centerline{\scshape Peter Philip }
\medskip
{\footnotesize
 \centerline{Department of Mathematics}
   \centerline{Ludwig-Maximilians University (LMU) Munich}
   \centerline{Theresienstrasse 39, 80333 Munich, Germany}
} 

\bigskip

\begin{abstract}
  This article studies a fully implicit 
  finite volume scheme for transient nonlinear heat transport
  equations coupled by nonlocal interface conditions modeling diffuse-gray radiation
  between the surfaces of (both open and closed) cavities. The model is considered in
  three space dimensions; modifications for the axisymmetric case are
  indicated. Extending the results of \cite{KP-05}, where a similar, but
  not fully implicit, finite volume scheme was considered, 
  a discrete maximum principle is established, 
  yielding discrete $L^\infty$-$L^\infty$ a priori bounds 
  as well as a unique discrete solution to the finite volume scheme. 
\end{abstract}

\section{Introduction}

Modeling and numerical simulation of conductive-radiative heat
transfer has become a standard tool to aid and improve numerous
industrial processes such as crystal growth by the Czochralski 
method \cite{DNRWC-90,KLD+09} and by the physical vapor transport method \cite{KPS-04} 
to mention just two examples.

Heat transfer models including diffuse-gray radiative interactions between cavity surfaces
consist of nonlinear elliptic (stationary) or parabolic (transient)
PDE (heat equations), where a nonlocal coupling occurs due to the integral operator of the radiosity equation.
Recent papers regarding the mathematical theory of
existence, uniqueness, and regularity of weak solutions include
\cite{Amo-10vol165,Amo-10vol164,DKS+11,Dru-10}.
Discretization methods in the context of radiative heat transfer between 
surfaces of nonconvex cavities have been studied in \cite{KP-05,Tii-98}. 
In \cite{Tii-98}, a finite element
approximation is considered for a stationary conductive-radiative heat
transfer problem. In \cite{KP-05}, 
transient heat transport is treated and, in contrast to \cite{Tii-98},
heat conduction is also considered
inside closed cavities, with a jumping diffusion coefficient at the
interface. Moreover, the emissivity is allowed to depend on the
temperature (i.e.\ on the solution). The setting of \cite{KP-05} is also used in the following.

The finite volume scheme presented in the current paper is similar to
that of \cite{KP-05}, however, it differs from the scheme
in \cite{KP-05} in being {\em fully implicit}: The scheme in
\cite{KP-05} is an implicit scheme with the exception that the 
temperature-dependent emissivities are approximated explicitly, i.e.\
they are evaluated at the temperature of the previous time step. 
For the scheme presented in the current paper, the 
emissivities are also approximated implicitly, i.e.\ they are 
evaluated at the temperature of the current time step.
The advantage of the scheme in \cite{KP-05}, i.e.\ of the explicit
discretization of the emissivities, is a simpler implementation: The
discrete nonlinear system is solved via Newton's method, and approximating
the emissivities explicitly considerably simplifies
the computation of the derivative of the discrete nonlinear operator.
However, it is well-known that explicitly discretized terms in
transient heat transfer problems can impair the convergence properties 
of the scheme by convergence requiring additional smallness conditions on the
time step size $k$ depending on the size $h$ of the elements of the
space discretization.
Proving the convergence of the scheme of \cite{KP-05} as well as of the
scheme of the present paper to the weak solution of the corresponding
continuous problem does not seem easy and is still work in progress.
However, preliminary results indicate the convergence proof for the \cite{KP-05} scheme
requires an additional smallness condition of the form
$k\sim h^2$ on the time step size arising
from the explicitly discretized terms; and that this smallness condition can be avoided for the
fully implicit scheme of the present paper. The discrete existence results of 
\cite{KP-05} as well as of the present paper only require a smallness
condition of the form $k\sim h$.

Therefore, the goal of the present paper is to extend the results of 
\cite{KP-05}, i.e.\ the discrete maximum principle as well as
existence and uniqueness of a solution to the discrete scheme, to the fully implicit scheme. 
Both finite volume schemes lead to nonlinear and nonlocal systems of
equations, the solvability of which is not at all obvious. 
Using an additional regularity assumption for the emissivity function, 
namely local Lipschitzness, the main results can be proved by the 
same method as in \cite{KP-05}: The proof of the discrete maximum principle as well as
existence and uniqueness are based on the root problem 
with maximum principle \cite[Th.\ 4.1]{KP-05}. 

The paper is organized as follows: 
In Sec.\ \ref{sec:heat}, the governing equations of transient conductive heat
transfer are recalled, completed by nonlocal interface and boundary
conditions arising from the modeling of diffuse-gray radiation. 
Section \ref{sec:heat} also provides the precise mathematical
setting. The discrete scheme is stated in Sec.\ \ref{sec:fvm}, 
where the nonlocal radiation operators are
discretized in Sec.\ \ref{sec:nonlocdis}, also providing some important properties of
the resulting discrete nonlocal operators. 
The proof of the discrete maximum principle as well as existence and 
uniqueness of a discrete solution to the finite volume scheme
are the subject of Sec.\ \ref{sec:sol}. 
The main results are found in Th.\ \ref{th:sol} and its two corollaries. 

\section{Transient Heat Transport Including Conduction and Diffuse-Gray Radiation}\label{sec:heat}

\subsection{Transient Heat Equations}

Transient conductive-radiative heat transport is considered on a time-space cylinder
$[0,T]\times\overline\Omega$, where the space domain $\Omega\subseteq\mathbb R^3$
is assumed to consist of two parts $\Omega_{\rm s}$ and $\Omega_{\rm g}$,
$\Omega_{\rm s}$ representing an opaque solid
and $\Omega_{\rm g}$ representing a transparent gas.
More precisely, we assume:
\begin{enumerate}[({A}-1)]
\item\label{item:aomega} 
  $T\in\mathbb R^+$, $\overline\Omega={\overline\Omega}_{\rm s}\cup{\overline\Omega}_{\rm g}$, 
  $\Omega_{\rm s}\cap\,\Omega_{\rm g}=\emptyset$,
  and each of the sets $\Omega$,
  $\Omega_{\rm s}$, $\Omega_{\rm g}$,
  is a nonvoid, polyhedral, bounded, and open subset of $\mathbb R^3$. 
\item\label{eq:bndeqbnds} 
  $\Omega_{\rm g}$ is enclosed by $\Omega_{\rm s}$, 
  i.e.\ $\partial\Omega_{\rm s}=\partial\Omega\dju\partial\Omega_{\rm g}$, 
  where $\dju$ denotes a disjoint union. 
  Thus, 
  $\Sigma:=\partial\Omega_{\rm g}={\overline\Omega}_{\rm s}\cap{\overline\Omega}_{\rm g}$,
  and 
  $\partial\Omega=\partial\Omega_{\rm s}\setminus\Sigma$ (see Fig.\ \ref{fig:domain}).
  \setcounter{philipenumi}{\value{enumi}}
\end{enumerate}
Heat conduction is considered throughout $\overline\Omega$. 
Nonlocal radiative heat transport is considered between points on the
surface $\Sigma$ of $\Omega_{\rm g}$ as well as between points on the
surfaces of open cavities 
(such as $O_1$ and $O_2$ in Fig.\ \ref{fig:domain}). However, 
to avoid introducing additional
boundary conditions, open cavities are not part of 
$\overline\Omega$, i.e.\ heat conduction is {\em not} considered in
open cavities (see Sec.\ \ref{sec:bnd} below for details).

\begin{figure}[htb!] 
  \begin{center}
\begin{picture}(0,0)%
\includegraphics{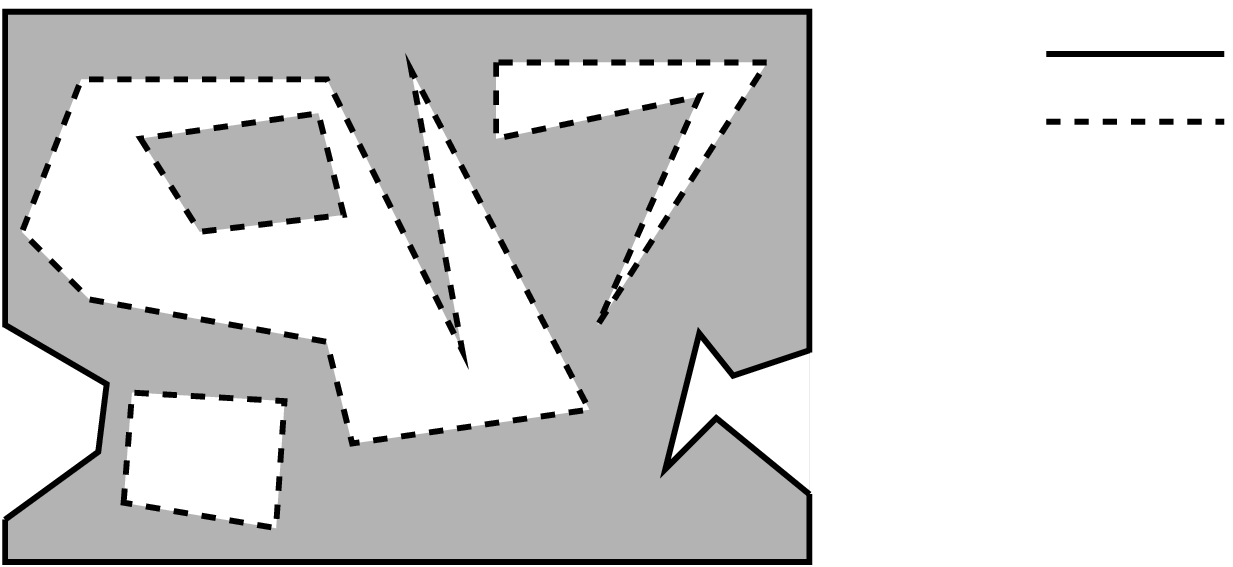}%
\end{picture}%
\setlength{\unitlength}{1782sp}%
\begingroup\makeatletter\ifx\SetFigFont\undefined%
\gdef\SetFigFont#1#2#3#4#5{%
  \reset@font\fontsize{#1}{#2pt}%
  \fontfamily{#3}\fontseries{#4}\fontshape{#5}%
  \selectfont}%
\fi\endgroup%
\begin{picture}(13068,5958)(1297,-6415)
\put(3511,-2401){\makebox(0,0)[lb]{\smash{{\SetFigFont{10}{12.0}{\rmdefault}{\mddefault}{\updefault}{\color[rgb]{0,0,0}$\Omega_{\rm s}$}%
}}}}
\put(7111,-2941){\makebox(0,0)[lb]{\smash{{\SetFigFont{10}{12.0}{\rmdefault}{\mddefault}{\updefault}{\color[rgb]{0,0,0}$\Omega_{\rm s}$}%
}}}}
\put(3781,-3481){\makebox(0,0)[lb]{\smash{{\SetFigFont{10}{12.0}{\rmdefault}{\mddefault}{\updefault}{\color[rgb]{0,0,0}$\Omega_{{\rm g},2}$}%
}}}}
\put(6841,-1501){\makebox(0,0)[lb]{\smash{{\SetFigFont{10}{12.0}{\rmdefault}{\mddefault}{\updefault}{\color[rgb]{0,0,0}$\Omega_{{\rm g},3}$}%
}}}}
\put(4321,-6091){\makebox(0,0)[lb]{\smash{{\SetFigFont{10}{12.0}{\rmdefault}{\mddefault}{\updefault}{\color[rgb]{0,0,0}$\Omega_{\rm g}=\Omega_{{\rm g},1}\cup\Omega_{{\rm g},2}\cup\Omega_{{\rm g},2}$}%
}}}}
\put(3061,-5371){\makebox(0,0)[lb]{\smash{{\SetFigFont{10}{12.0}{\rmdefault}{\mddefault}{\updefault}{\color[rgb]{0,0,0}$\Omega_{{\rm g},1}$}%
}}}}
\put(9181,-4831){\makebox(0,0)[lb]{\smash{{\SetFigFont{10}{12.0}{\rmdefault}{\mddefault}{\updefault}{\color[rgb]{0,0,0}$O_2$}%
}}}}
\put(1531,-4921){\makebox(0,0)[lb]{\smash{{\SetFigFont{10}{12.0}{\rmdefault}{\mddefault}{\updefault}{\color[rgb]{0,0,0}$O_1$}%
}}}}
\put(10531,-2491){\makebox(0,0)[lb]{\smash{{\SetFigFont{10}{12.0}{\rmdefault}{\mddefault}{\updefault}{\color[rgb]{0,0,0}$\partial\Omega_{\rm s}=\partial\Omega\dju\Sigma$}%
}}}}
\put(10531,-3301){\makebox(0,0)[lb]{\smash{{\SetFigFont{10}{12.0}{\rmdefault}{\mddefault}{\updefault}{\color[rgb]{0,0,0}$\Sigma:=\partial\Omega_{\rm g}={\overline{\Omega}}_{\rm s}\cap{\overline{\Omega}}_{\rm g}$}%
}}}}
\put(10531,-4111){\makebox(0,0)[lb]{\smash{{\SetFigFont{10}{12.0}{\rmdefault}{\mddefault}{\updefault}{\color[rgb]{0,0,0}$\partial\Omega=\partial\Omega_{\rm s}\setminus\Sigma$}%
}}}}
\put(11521,-1051){\makebox(0,0)[lb]{\smash{{\SetFigFont{10}{12.0}{\rmdefault}{\mddefault}{\updefault}{\color[rgb]{0,0,0}$\partial\Omega$}%
}}}}
\put(10531,-1771){\makebox(0,0)[lb]{\smash{{\SetFigFont{10}{12.0}{\rmdefault}{\mddefault}{\updefault}{\color[rgb]{0,0,0}$\Sigma:=\partial\Omega_{\rm g}$}%
}}}}
\end{picture}%
  \end{center}
  \caption{Possible shape of a 2-dimensional section through the
    3-dimensional domain 
    $\overline\Omega=\overline\Omega_{\rm s}\cup\overline\Omega_{\rm g}$ with open 
    cavities $O_1$ and $O_2$. Note that, according to \aref{eq:bndeqbnds},
    $\Omega_{\rm g}$ is engulfed by $\Omega_{\rm s}$, which
    can not be seen in the 2-dimensional section.}\label{fig:domain}
\end{figure}

\noindent Transient heat conduction is described by 
\begin{equation}
  \label{eq:heatconsol}
  \partt{\ep_m(\theta)}
  -\divergence(\kappa_m\grad\theta)  = f_m(t,x)\q
  \text{in $]0,T[\times\Omega_m$\q\q ($m\in\{{\rm s,g}\}$)},
\end{equation}
where $\theta(t,x)\in\mathbb R^+_0$ represents absolute temperature, 
depending on the time coordinate $t$ and on the space
coordinate $x$; the continuous, strictly increasing, 
nonnegative functions $\ep_m\in C(\mathbb R^+_0,\mathbb R^+_0)$ 
represent the internal energy in the solid and in the gas, respectively, 
$\kappa_m\in\mathbb R^+_0$ represent the
thermal conductivity in solid and gas, respectively, assumed constant for simplicity, 
and $f_m$ represent heat sources due to some heating
mechanism. In practice, for many heating mechanisms such as
induction or resistance heating, one has $f_{\rm g}=0$. 

Throughout this paper, \aref{item:ae} -- \aref{item:af} are assumed, where:
\begin{enumerate}[({A}-1)]
  \setcounter{enumi}{\thephilipenumi}
\item\label{item:ae} For $m\in\{{\rm s,g}\}$, 
  $\ep_m:\,\mathbb R^+_0\longrightarrow\mathbb R^+_0$ is continuous and
  at least of linear growth, i.e.\ there is $C_{\ep}\in\mathbb R^+$ such that 
  \begin{equation*}
    \ep_m(\theta_2)\geq (\theta_2-\theta_1)\,C_{\ep}+\ep_m(\theta_1)\q\q
    (\theta_2\geq\theta_1\geq 0).
  \end{equation*}    
\item\label{item:ak} For $m\in\{{\rm s,g}\}$: $\kappa_m\in\mathbb R^+_0$.
\item\label{item:af} For $m\in\{{\rm s,g}\}$: $f_m\in L^\infty\big(]0,T[\times\Omega_m\big)$,
  $f_m\geq 0$ a.e. 
  \setcounter{philipenumi}{\value{enumi}}
\end{enumerate}

\subsection{Nonlocal Interface Conditions}\label{sec:if}

Continuity of the heat flux on the interface $\Sigma$ between solid and gas, 
where one needs to account for
radiosity $R$ and for irradiation $J$,
yields the following interface condition
for \eqref{eq:heatconsol}:
\begin{equation}
  \label{eq:intcon}
  \big(\kappa_{\rm g}\grad\theta\big)\hspace{-3pt}\upharpoonright_{\overline\Omega_{\rm g}}\cdot\,\vec n_{\rm g} 
  +R(\theta)-J(\theta)
  =\big(\kappa_{\rm s}\grad\theta\big)\hspace{-3pt}\upharpoonright_{\overline\Omega_{\rm s}}\cdot\,\vec n_{\rm g}\quad
  \text{on $]0,T[\times\Sigma$}.
\end{equation}
Here, $\vec n_{\rm g}$
denotes the unit normal vector pointing from gas to solid and $\hspace{-3pt}\upharpoonright_{}$ denotes restriction
(or trace).

As the solid is assumed opaque,
$R(\theta)$ and $J(\theta)$ are computed
according to the net radiation model for
diffuse-gray surfaces, i.e.\ reflection and emittance 
are taken to be independent of the angle of 
incidence and independent of the wavelength.
At each point of the surface $\Sigma$,
the radiosity is the sum of the emitted radiation 
$E(\theta)$ and of the
reflected radiation $J_{\rm r}(\theta)$:
\begin{equation}
  \label{eq:rej}
  R=E+J_{\rm r}\;
  \text{ on $\Sigma$}.
\end{equation}
According to the Stefan-Boltzmann law, 
\begin{equation}
  \label{eq:sblaw}
  E(\theta)=\sigma\,\epsilon(\theta)\,\theta^4\;
  \text{ on $\Sigma$},
\end{equation}
where $\sigma$ represents the Boltzmann radiation constant,
and $\epsilon$ represents the potentially tem\-per\-a\-ture-dependent 
emissivity of the solid surface. It is assumed that:
\begin{enumerate}[({A}-1)]
  \setcounter{enumi}{\thephilipenumi}
\item\label{item:em} 
  $\sigma\in\mathbb R^+$, 
  $\epsilon:\,\mathbb R^+_0\longrightarrow]0,1]$
  is locally Lipschitz continuous, i.e., for each $r\in\mathbb R^+_0$:
  \begin{equation*}
    L_{\epsilon,r}
    :=
    \sup
    \left\{
      \frac{|\epsilon(\theta_1)-\epsilon(\theta_2)|}{|\theta_1-\theta_2|}:\,
      (\theta_1,\theta_2)\in[0,r]^2,\,
      \theta_1\not=\theta_2
    \right\}
    \in\mathbb R^+_0.
  \end{equation*}
  \setcounter{philipenumi}{\value{enumi}}
\end{enumerate}
\begin{remark}
  Condition \aref{item:em} is stronger than \cite[(A-6)]{KP-05}, 
  where $\epsilon$ was only assumed to be continuous, but not
  necessarily locally Lipschitz. For the proof of \cite[Lem.\ 3.2(c)]{KP-05},
  i.e.\ for the proof of the local Lipschitz continuity of the maps
  $V_{\Gamma,\alpha}(\tilde{\vec u},\cdot)$ and 
  $V_{\Sigma,\alpha}(\tilde{\vec u},\cdot)$, respectively,
  due to the explicit discretization of $\epsilon$, the regularity of
  $\epsilon$ was not an issue. However, here, where $\epsilon$ is
  discretized implicitly, local Lipschitz continuity of $\epsilon$ is
  necessary to proof the respective local Lipschitz continuity of 
  $V_{\Gamma,\alpha}$ and $V_{\Sigma,\alpha}$ in 
  Lem.\ \ref{lem:nonnegrad}\eqref{item:nonnegradb} below
  (in general, if $\epsilon$ is only continuous, then local Lipschitz
  continuity of $V_{\Gamma,\alpha}$ and $V_{\Sigma,\alpha}$ can not be expected).
\end{remark}
\begin{remark}
  \label{rem:epsminr}
  For each $r\in\mathbb R^+$, 
  the continuous map $\epsilon$ attains its minimum,
  denoted by $\epsilon_{\min,r}$, on the compact set $[0,r]$.
  Since $\epsilon>0$ according to \aref{item:em}, it follows that
  $\epsilon_{\min,r}>0$ for each $r\in\mathbb R^+$.
\end{remark}

Using the presumed opaqueness together with Kirchhoff's law yields 
\begin{equation}
  \label{eq:jrefwref}
  J_{\rm r}=(1-\epsilon)\, J. 
\end{equation}
Due to diffuseness, the irradiation can be calculated as
\begin{equation}
  \label{eq:jandk}
  J(\theta)=K(R(\theta)),
\end{equation}
using the nonlocal integral radiation operator $K$ defined by
\begin{align}
  \label{eq:K}
  K(\rho)(x)
  &:=\int_{\Sigma}\Lambda(x,y)\,\omega(x,y)\,\rho(y)\, {\mathrm{d}} y \,
  \quad\text{for a.e.\ $x\in\Sigma$},\\[1mm]
  \omega(x,y)&:=
  \dfrac{ \big(\vec{n}_{\rm s}(y) \cdot ( x - y)\big) \, \big(\vec{n}_{\rm s}(x) \cdot ( y - x)\big) }
  { \pi\big( (y - x ) \cdot (y - x )\big)^2 }
  \quad\text{for a.e.\ $(x,y)\in\Sigma\times\Sigma$},
  \label{eq:viewfacs}\\[1mm]
  \Lambda(x,y)&:=
  \begin{cases}
    0&\text{if $\Sigma\,\cap\,]x,y[\not=\emptyset$},\\
    1&\text{if $\Sigma\,\cap\,]x,y[=\emptyset$}
  \end{cases}
  \quad\text{for each $(x,y)\in\Sigma\times\Sigma$},
\end{align}
where $\omega$ is called view factor,
$\Lambda$ is called visibility factor (being $1$ if, and only if,
$x$ and $y$ are mutually visible),
and $\vec{n}_{\rm s}$ denotes the outer unit normal to the
solid domain $\Omega_{\rm s}$, 
existing almost everywhere on the Lipschitz interface $\Sigma$.
The following Th.\ \ref{th:kop} summarizes properties of
$\omega$, $\Lambda$, and $K$, relevant to our considerations.
\begin{theorem}
  \label{th:kop}
  Assume \aref{item:aomega} and \aref{eq:bndeqbnds}.
  \begin{enumerate}[\bf(a)]
  \item\label{item:thkopa}
    The kernel $\Lambda\omega$ of $K$ is almost everywhere
    nonnegative (actually positive for $\Lambda(x,y)$ $=1$), symmetric, and
    $\Lambda(x,\cdot)\,\omega(x,\cdot)$ is in $L^1(\Sigma)$ with
    \begin{equation}
      \label{eq:itemthkopa}
      \int_{\Sigma}\Lambda(x,y)\,\omega(x,y)\, {\mathrm{d}} y \,=1
      \quad\text{for a.e.\ $x\in\Sigma$}.
    \end{equation}
  \item\label{item:thkopb}
    For each $1\leq p\leq\infty$, the operator
    $K:\,L^p(\Sigma)\longrightarrow L^p(\Sigma)$ given by \eqref{eq:K}
    is well-defined, linear, bounded, and positive with $\|K\|=1$.
  \end{enumerate}
\end{theorem}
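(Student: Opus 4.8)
The plan is to establish part \eqref{item:thkopa} first, since part \eqref{item:thkopb} will follow from it by standard functional-analytic arguments. The symmetry of the kernel is immediate from \eqref{eq:viewfacs}: the numerator is the product of the two factors $\vec n_{\rm s}(y)\cdot(x-y)$ and $\vec n_{\rm s}(x)\cdot(y-x)$, which is invariant under exchanging $x$ and $y$, while the denominator depends only on $|x-y|$; likewise $\Lambda(x,y)=\Lambda(y,x)$ because $]x,y[\,=\,]y,x[$. For nonnegativity I would use the geometry of \aref{item:aomega}--\aref{eq:bndeqbnds}: since $\vec n_{\rm s}$ is the outer normal to $\Omega_{\rm s}$, along $\Sigma$ it points from the solid into the gas cavity $\Omega_{\rm g}$. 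When $\Lambda(x,y)=1$ the open segment $]x,y[$ lies in $\Omega_{\rm g}$, so the direction $x-y$ enters $\Omega_{\rm g}$ at $y$ and hence $\vec n_{\rm s}(y)\cdot(x-y)\ge 0$; by the symmetric argument $\vec n_{\rm s}(x)\cdot(y-x)\ge 0$. Thus the numerator is nonnegative, and it is strictly positive except on the null set where one of these inner products vanishes (tangency of the segment to $\Sigma$), which on the polyhedral $\Sigma$ has surface measure zero.

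The crux is the normalization \eqref{eq:itemthkopa}. Here I would fix $x\in\Sigma$ in the relative interior of a face and change variables from the surface integral over $y$ to an integral over directions. Writing $r=|x-y|$ and letting $\alpha,\beta$ denote the angles the segment makes with $\vec n_{\rm s}(x)$ and $\vec n_{\rm s}(y)$, so that $\vec n_{\rm s}(x)\cdot(y-x)=r\cos\alpha$ and $\vec n_{\rm s}(y)\cdot(x-y)=r\cos\beta$, the product $\omega(x,y)\,\dy$ equals $\tfrac1\pi\cos\alpha\,\mathrm{d}S(\vec\xi)$, where $\mathrm{d}S(\vec\xi)=\cos\beta\,r^{-2}\,\dy$ is the element of solid angle (unit-sphere surface measure) in the direction $\vec\xi=(y-x)/r$. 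The visibility factor $\Lambda$ is precisely what guarantees that, for each direction $\vec\xi$ pointing into the cavity at $x$ (i.e.\ $\cos\alpha>0$), there is a unique first intersection $y(\vec\xi)$ of the ray with $\Sigma$, and that $\Lambda(x,y)=1$ holds exactly for these first-hit points. Since $\Sigma$ encloses $\Omega_{\rm g}$ by \aref{eq:bndeqbnds}, these points sweep out exactly the inward hemisphere of directions, so the integral reduces to the standard hemispherical integral $\tfrac1\pi\int_{\text{hemisphere}}\cos\alpha\,\mathrm{d}S=1$. The same computation shows $\Lambda(x,\cdot)\,\omega(x,\cdot)\in L^1(\Sigma)$, the apparent singularity of $\omega$ at $y=x$ being integrable because $\cos\alpha\cos\beta\to 0$ there.

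With \eqref{item:thkopa} in hand, part \eqref{item:thkopb} is routine. Linearity is immediate from \eqref{eq:K}. For the $L^\infty$ bound, $|K(\rho)(x)|\le\|\rho\|_\infty\int_\Sigma\Lambda(x,y)\,\omega(x,y)\,\dy=\|\rho\|_\infty$ by \eqref{eq:itemthkopa}. For the $L^1$ bound, Tonelli's theorem together with the symmetry of the kernel and \eqref{eq:itemthkopa} gives $\int_\Sigma|K(\rho)(x)|\,\mathrm{d}x\le\int_\Sigma|\rho(y)|\big(\int_\Sigma\Lambda(x,y)\,\omega(x,y)\,\mathrm{d}x\big)\,\dy=\|\rho\|_1$. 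For $1<p<\infty$ I would either interpolate between these two estimates (Riesz--Thorin) or apply Jensen's inequality to the probability kernel $\Lambda(x,\cdot)\,\omega(x,\cdot)\,\dy$ and integrate in $x$, again using symmetry, to obtain $\|K(\rho)\|_p\le\|\rho\|_p$. Positivity follows at once from nonnegativity of the kernel. Finally, these bounds give $\|K\|\le 1$, while $K(\mathbf 1)=\mathbf 1$ by \eqref{eq:itemthkopa} forces $\|K\|\ge 1$ in every $L^p(\Sigma)$, so $\|K\|=1$.

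The main obstacle is, as indicated, the normalization \eqref{eq:itemthkopa}: making the change of variables to solid angle rigorous and verifying that $\Lambda$ realizes the claimed bijection between inward directions at $x$ and first-hit points on the possibly nonconvex polyhedral surface $\Sigma$. Everything else reduces to symmetry, Tonelli/interpolation, and testing on constants.
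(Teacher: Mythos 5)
The paper does not prove this theorem at all: its ``proof'' is a citation to \cite[Lem.~1]{Tii-97ejam} and \cite[Lem.~2]{Tii-97mmias}, and Theorem~\ref{th:kop} is imported wholesale from Tiihonen. What you have written is therefore not a variant of the paper's argument but a reconstruction of the argument contained in the cited references, and it is essentially the right one: symmetry by inspection of \eqref{eq:viewfacs}, the normalization \eqref{eq:itemthkopa} by the solid-angle change of variables $\omega(x,y)\dy=\pi^{-1}\cos\alpha\,\mathrm{d}S(\vec\xi)$ over the inward hemisphere at $x$, and part \eqref{item:thkopb} by testing on constants, Tonelli plus symmetry for $L^1$, the trivial bound for $L^\infty$, and interpolation (or Jensen with the stochastic kernel) in between. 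If you were writing this into the paper, the economical move would be to cite as the author does; if you carry out the proof, the work is exactly where you locate it, namely making the first-hit parametrization rigorous on a nonconvex polyhedral $\Sigma$.

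One step deserves more care than your sketch gives it. You assert that $\Lambda(x,y)=1$ forces $]x,y[\,\subseteq\Omega_{\rm g}$, and you use this both for nonnegativity and for identifying $\{y:\Lambda(x,y)=1\}$ with the first-hit points of rays into the cavity. With the literal definition of $\Lambda$ (the open segment merely avoids $\Sigma$), the connected segment lies in a single component of $\mathbb R^3\setminus\Sigma$, but that component may be inside the opaque solid rather than the gas (e.g.\ two points on the surface of a solid core enclosed by an annular gas region, joined by a chord through the core). Nonnegativity survives this case, because then \emph{both} inner products $\vec n_{\rm s}(y)\cdot(x-y)$ and $\vec n_{\rm s}(x)\cdot(y-x)$ are nonpositive and their product is still nonnegative --- you should argue by the consistency of the two signs rather than by assuming the segment is in $\Omega_{\rm g}$. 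For the normalization, however, such through-the-solid pairs would add a spurious positive contribution on top of the hemispherical integral, so your bijection claim needs either the physically intended reading of $\Lambda$ (visibility within the transparent region) or an argument that \aref{eq:bndeqbnds} excludes these configurations. Since this is precisely the content of the cited lemmas, flagging it as the main obstacle is fair, but as written that step is the one genuine gap in the plan.
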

\begin{proof}
  See \cite[Lem.\ 1]{Tii-97ejam}
  and \cite[Lem.\ 2]{Tii-97mmias}.
\end{proof}
Combining \eqref{eq:rej} through \eqref{eq:jandk} provides the
so-called radiosity equation for $R$:
\begin{equation}
  \label{eq:radiositys}
  \big(\id-(1-\epsilon(\theta))K\big)(R)
  = \sigma\,\epsilon(\theta)\,\theta^4,
\end{equation}
where $\id$ denotes the identity operator. 
The following Th.\ \ref{th:inv} allows to solve 
\eqref{eq:radiositys} for $R$. 
\begin{theorem}
  \label{th:inv}
  Assume \aref{item:aomega}, \aref{eq:bndeqbnds}, \aref{item:em}.
  \begin{enumerate}[\bf(a)]
  \item\label{item:thinva}
    Let $p\in[1,\infty]$. Then, for each $\theta\in L^1(\Sigma)$,
    the operator $\id-(1-\epsilon(\theta))K$ has an inverse in 
    the Banach space $\mathcal L(L^p(\Sigma),L^p(\Sigma))$ of bounded linear operators.
  \item\label{item:thinvb}
    Let $p\in[1,\infty]$. For each $\theta\in L^{4p}(\Sigma)$,
    the radiosity equation \eqref{eq:radiositys} has the unique solution
    $R(\theta)=\big(\id-(1-\epsilon(\theta))K\big)^{-1}\big(\sigma\,\epsilon(\theta)\,\theta^4\big)\in L^p(\Sigma)$.
  \end{enumerate}
\end{theorem}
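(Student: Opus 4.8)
The plan is to prove the two assertions in order: first establish (a), the invertibility of $A_\theta := \id - (1-\epsilon(\theta))K$ on each $L^p(\Sigma)$, and then read off (b) by applying the inverse to the explicit right-hand side of \eqref{eq:radiositys}. Throughout, fix $\theta$ and abbreviate $g := 1-\epsilon(\theta)$ and $T := M_g\,K$, where $M_g$ denotes multiplication by $g$. By \aref{item:em} we have $0 \le g < 1$ a.e.\ on $\Sigma$, so $M_g$ is a positive operator with $\|M_g\| \le 1$; together with Theorem~\ref{th:kop}(b) this makes $T$ a positive bounded operator on every $L^p(\Sigma)$ with $\|T\| \le \|g\|_\infty\,\|K\| \le 1$. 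The task therefore reduces to showing $1 \notin \sigma(T)$, after which $A_\theta^{-1} = \sum_{n\ge 0} T^n$.

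First I would dispose of the benign case. If $\essinf_\Sigma \epsilon(\theta) > 0$ --- which, by Remark~\ref{rem:epsminr}, is automatic whenever $\theta$ is essentially bounded (then $\epsilon(\theta) \ge \epsilon_{\min,r} > 0$ for $r = \|\theta\|_\infty$) --- one has $\|T\| \le \|g\|_\infty = 1 - \essinf_\Sigma\epsilon(\theta) < 1$, the Neumann series converges in $\mathcal L(L^p(\Sigma),L^p(\Sigma))$, and we are done. The genuine obstacle is the general measurable $\theta \in L^1(\Sigma)$, for which $\epsilon(\theta)$ may have essential infimum $0$, so that $\|T\| = 1$ and the naive Neumann estimate breaks down. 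Here one cannot argue through the operator norm and must instead control the \emph{spectral radius} $r(T)$.

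To handle the general case I would combine compactness with the symmetry from Theorem~\ref{th:kop}(a). The concrete view-factor kernel $\Lambda\omega$ renders $K$ compact on $L^p(\Sigma)$ (part of the standard mapping theory of the radiosity operator; on the piecewise-flat $\Sigma$ the same-face contributions to $\omega$ even vanish), hence $T$ is compact and $\sigma(T)\setminus\{0\}$ consists of eigenvalues. It thus suffices to show $r(T) < 1$, i.e.\ to exclude eigenvalues of modulus $\ge 1$; this forces $\|T^n\|^{1/n}\to r(T) < 1$ and yields the convergent Neumann series. I would locate the spectrum in $L^2(\Sigma)$, where the symmetry of the kernel makes $K$ self-adjoint. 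Setting $S := M_{\sqrt g}\,K\,M_{\sqrt g}$, which is self-adjoint and compact, the identity of the nonzero spectra of $AB$ and $BA$ (with $A = M_{\sqrt g}$, $B = M_{\sqrt g}K$) gives $\sigma(T)\setminus\{0\} = \sigma(S)\setminus\{0\}$. For $\psi \ne 0$,
\begin{equation*}
  -\,\|\psi\|_2^2 < -\int_\Sigma g\,|\psi|^2 \le \langle S\psi,\psi\rangle = \langle K(\sqrt g\,\psi),\, \sqrt g\,\psi\rangle \le \int_\Sigma g\,|\psi|^2 < \int_\Sigma |\psi|^2 = \|\psi\|_2^2 ,
\end{equation*}
where the middle inequalities use $\|K\|=1$ and self-adjointness, and the two strict inequalities use $\int_\Sigma \epsilon(\theta)\,|\psi|^2 > 0$ (valid since $\epsilon(\theta) > 0$ a.e.). Hence $\sigma(S)\subseteq(-1,1)$, and by compactness its extreme points are attained eigenvalues, so no eigenvalue has modulus $\ge 1$. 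Therefore $r(T) < 1$, giving invertibility of $A_\theta$ on $L^2(\Sigma)$. For the remaining $p$ I would note that $T$ is compact and injective on $L^p(\Sigma)$ --- a homogeneous solution $R = TR \in L^p(\Sigma)$ lies, after the bootstrap afforded by the boundedness/smoothing of $K$, in $L^2(\Sigma)$ and hence vanishes by the bound above --- so the Fredholm alternative yields invertibility on each $L^p(\Sigma)$, completing (a).

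Finally, for (b): if $\theta\in L^{4p}(\Sigma)\subseteq L^1(\Sigma)$ then $\theta^4\in L^p(\Sigma)$ and, since $0<\epsilon\le 1$, also $\sigma\,\epsilon(\theta)\,\theta^4 \in L^p(\Sigma)$; applying $A_\theta^{-1}$ from (a) to this right-hand side of \eqref{eq:radiositys} produces the asserted unique solution $R(\theta)\in L^p(\Sigma)$. The one delicate point throughout is the passage from $\|T\|\le 1$ to $r(T)<1$ when $\essinf_\Sigma\epsilon(\theta)=0$, which is exactly where compactness of $K$ and the strict quadratic-form bound are indispensable; everything else is routine.
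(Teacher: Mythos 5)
Your reduction of part (b) to part (a) is fine, and your ``benign case'' ($\essinf_\Sigma\epsilon(\theta)>0$, Neumann series from $\|(1-\epsilon(\theta))K\|\le 1-\essinf_\Sigma\epsilon(\theta)<1$) is correct. For comparison: the paper does not argue at all here --- it quotes \cite[Th.~5]{DP-11} for (a), noting that $\Sigma$ is polyhedral, and observes that (b) follows since $\sigma>0$ and $\epsilon(\theta)\in L^\infty(\Sigma)$.

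The genuine gap is your compactness claim. Everything in your treatment of the general case --- the reduction of $\sigma(T)\setminus\{0\}$ to eigenvalues, the attainment of the extreme points of $\sigma(S)$, the bootstrap for a homogeneous solution, and the Fredholm alternative for $p\ne 2$ --- rests on $K$ being compact on $L^p(\Sigma)$. In the polyhedral setting imposed by \aref{item:aomega} this is false: the view factor kernel is singular along the edges where two faces of $\Sigma$ meet, and $K$ is bounded but \emph{not} compact. This is not a side issue; it is precisely the content of the reference the paper cites for this very theorem (\cite{DP-11} is titled ``Noncompactness of integral operators modeling diffuse-gray radiation in polyhedral and transient settings''). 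Your observation that same-face contributions to $\omega$ vanish explains why the compactness arguments valid for $C^{1,\alpha}$ enclosures do not transfer, but it does not rescue compactness. Without it, the strict quadratic-form inequality $\langle S\psi,\psi\rangle<\|\psi\|_2^2$ for every $\psi\ne 0$ does not yield $r(S)<1$: when $\essinf_\Sigma\epsilon(\theta)=0$ the inequality carries no uniform gap, the supremum of the form can equal $1$ without being attained, and the spectral radius can a priori be $1$. As written, your argument therefore establishes the theorem only under the additional hypothesis $\essinf_\Sigma\epsilon(\theta)>0$ (e.g.\ for $\theta\in L^\infty(\Sigma)$), not for arbitrary $\theta\in L^1(\Sigma)$ as stated; the general case genuinely requires the non-compactness-based argument of \cite[Th.~5]{DP-11}.
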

\begin{proof}
  Part \eqref{item:thinva} is given by \cite[Th.\ 5]{DP-11},
  since $\Sigma$ is assumed polyhedral by \aref{item:aomega};
  \eqref{item:thinvb} follows from \eqref{item:thinva}, as $\sigma>0$ and $\epsilon(\theta)\in L^\infty(\Sigma)$ by \aref{item:em}.
\end{proof}
With the computation
\begin{eqnarray}
  R(\theta)-J(\theta)
  &\overset{\eqref{eq:rej}}=&
  E(\theta)+J_{\rm r}(\theta)-J(\theta)
  \overset{\text{\eqref{eq:sblaw},\eqref{eq:jrefwref}}}=
  \sigma\,\epsilon(\theta)\,\theta^4-\epsilon(\theta)\,J(\theta)\nonumber\\
  &\overset{\eqref{eq:jandk}}=&
  -\epsilon(\theta)\,\big(K(R(\theta))-\sigma\,\theta^4\big),
  \label{eq:minrplj}
\end{eqnarray}
\eqref{eq:intcon} becomes
\begin{equation}
  \label{eq:intconop}
  (\kappa_{\rm g}\grad\theta)\res{\overline\Omega_{\rm g}}\cdot\,\vec n_{\rm g} 
  -\epsilon(\theta)\,\big(K(R(\theta))-\sigma\,\theta^4\big) 
  =(\kappa_{\rm s}\grad\theta)\res{\overline\Omega_{\rm s}}\cdot\,\vec n_{\rm g}\q
  \text{on $]0,T[\times\Sigma$},
\end{equation}
where $R(\theta)$ is given by Th.\ \ref{th:inv}\eqref{item:thinvb}.

\subsection{Nonlocal Outer Boundary Conditions}\label{sec:bnd}

\begin{definition}
  \label{def:o}
  Let $\conv(\overline\Omega)$ denote the closed convex hull of $\Omega$, 
  and define $O:=\interior(\conv(\overline\Omega))\setminus\overline \Omega$, 
  $\Gamma_\Omega:=\overline\Omega\cap\overline O$, 
  $\Gamma:=\partial O$, and
  $\Gamma_{\rm ph}
  :=\partial\conv(\overline\Omega)\cap\partial O$.
  Then $(\Gamma_\Omega,\Gamma_{\rm ph})$ forms a partition of $\Gamma$. 
  The set $O$ is the domain of the open
  radiation region (e.g., one has $O=O_1\cup O_2$ in 
  Figures \ref{fig:domain} and \ref{fig:openRad}).   
\end{definition}
\bigskip

The condition on the interface between $\Omega$ and 
the open radiation region $O$ reads
\begin{equation}
  \label{eq:intconopen}
  \kappa_{\rm s}\grad\theta\cdot\,\vec n_{\rm s} 
  +R_\Gamma(\theta)-J_\Gamma(\theta)
  =0\q
  \text{on $]0,T[\times\Gamma_{\Omega}$}
\end{equation}
in analogy with \eqref{eq:intcon}, where $\vec n_{\rm s}$ is the outer
unit normal vector to the solid. To allow for radiative interactions 
between surfaces of open cavities and the ambient environment, including
reflections at the cavity's surfaces, the set $\Gamma_{\rm ph}$ as
defined above, is used as a black body phantom 
closure (see Fig.\ \ref{fig:openRad}), 
emitting radiation at an external temperature $\theta_{\rm ext}$,
\begin{enumerate}[({A}-1)]
  \setcounter{enumi}{\thephilipenumi}
\item\label{item:thetar} 
  $\theta_{\rm ext}\in\mathbb R^+$. 
  \setcounter{philipenumi}{\value{enumi}}
\end{enumerate}
Thus, $\epsilon\equiv 1$
on $\Gamma_{\rm ph}$, implying
\begin{equation}
  \label{eq:ronphan}
  R_\Gamma(\theta)(x)=\sigma\,\theta_{\rm ext}^4\q (x\in\Gamma_{\rm ph}).
\end{equation}
Here and in the following, it is
assumed that the apparatus is exposed to a
black body environment (e.g.\ a large isothermal
room) radiating at $\theta_{\rm ext}$. 
A relation analogous to \eqref{eq:minrplj} holds on 
$\Gamma_{\Omega}$, and using it in \eqref{eq:intconopen} yields 
\begin{equation}
  \label{eq:opencav}
  \kappa_{\rm s}\grad\theta\cdot\,\vec n_{\rm s}
  -\epsilon(\theta)\,\big(K_\Gamma(R_\Gamma(\theta))-\sigma\,\theta^4\big) 
  =0\q
  \text{on $]0,T[\times\Gamma_{\Omega}$},
\end{equation}
where $K_\Gamma$ is defined analogous to $K$
in \eqref{eq:K}, except that the integration is carried out over 
$\Gamma$ instead of over $\Sigma$. 

\begin{figure}[htb!] 
  \begin{center}
\begin{picture}(0,0)%
\includegraphics{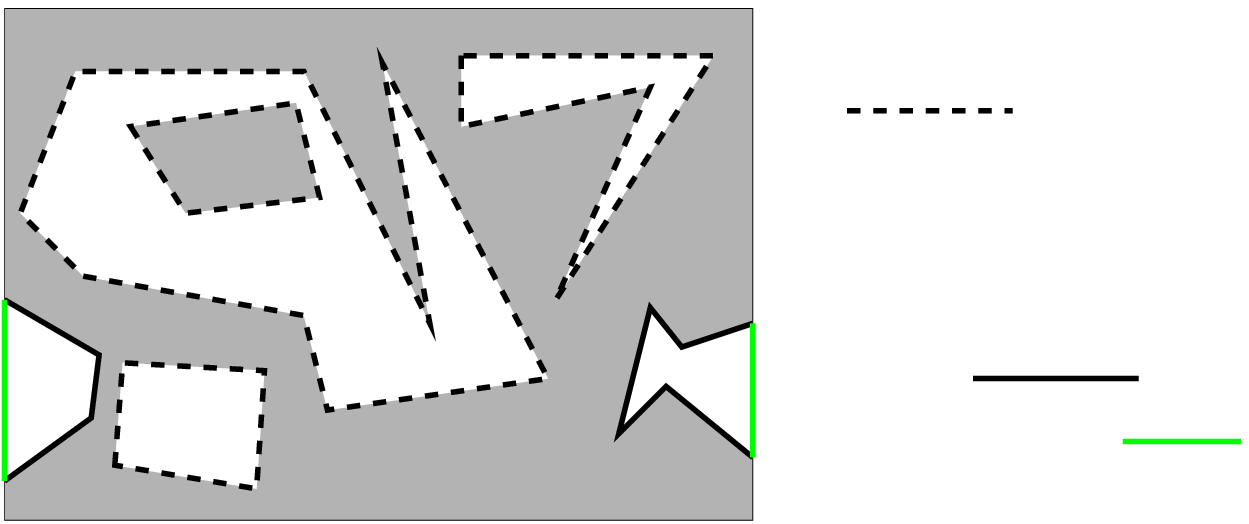}%
\end{picture}%
\setlength{\unitlength}{1657sp}%
\begingroup\makeatletter\ifx\SetFigFont\undefined%
\gdef\SetFigFont#1#2#3#4#5{%
  \reset@font\fontsize{#1}{#2pt}%
  \fontfamily{#3}\fontseries{#4}\fontshape{#5}%
  \selectfont}%
\fi\endgroup%
\begin{picture}(14238,5904)(1297,-6403)
\put(3511,-2401){\makebox(0,0)[lb]{\smash{{\SetFigFont{10}{12.0}{\rmdefault}{\mddefault}{\updefault}{\color[rgb]{0,0,0}$\Omega_{\rm s}$}%
}}}}
\put(7111,-2941){\makebox(0,0)[lb]{\smash{{\SetFigFont{10}{12.0}{\rmdefault}{\mddefault}{\updefault}{\color[rgb]{0,0,0}$\Omega_{\rm s}$}%
}}}}
\put(3781,-3481){\makebox(0,0)[lb]{\smash{{\SetFigFont{10}{12.0}{\rmdefault}{\mddefault}{\updefault}{\color[rgb]{0,0,0}$\Omega_{{\rm g},2}$}%
}}}}
\put(6841,-1501){\makebox(0,0)[lb]{\smash{{\SetFigFont{10}{12.0}{\rmdefault}{\mddefault}{\updefault}{\color[rgb]{0,0,0}$\Omega_{{\rm g},3}$}%
}}}}
\put(4321,-6091){\makebox(0,0)[lb]{\smash{{\SetFigFont{10}{12.0}{\rmdefault}{\mddefault}{\updefault}{\color[rgb]{0,0,0}$\Omega_{\rm g}=\Omega_{{\rm g},1}\cup\Omega_{{\rm g},2}\cup\Omega_{{\rm g},2}$}%
}}}}
\put(3061,-5371){\makebox(0,0)[lb]{\smash{{\SetFigFont{10}{12.0}{\rmdefault}{\mddefault}{\updefault}{\color[rgb]{0,0,0}$\Omega_{{\rm g},1}$}%
}}}}
\put(9181,-4831){\makebox(0,0)[lb]{\smash{{\SetFigFont{10}{12.0}{\rmdefault}{\mddefault}{\updefault}{\color[rgb]{0,0,0}$O_2$}%
}}}}
\put(1531,-4921){\makebox(0,0)[lb]{\smash{{\SetFigFont{10}{12.0}{\rmdefault}{\mddefault}{\updefault}{\color[rgb]{0,0,0}$O_1$}%
}}}}
\put(10081,-1051){\makebox(0,0)[lb]{\smash{{\SetFigFont{10}{12.0}{\rmdefault}{\mddefault}{\updefault}{\color[rgb]{0,0,0}$\overline{\Omega}={\overline{\Omega}}_{\rm s}\cup{\overline{\Omega}}_{\rm g}$}%
}}}}
\put(10081,-1771){\makebox(0,0)[lb]{\smash{{\SetFigFont{10}{12.0}{\rmdefault}{\mddefault}{\updefault}{\color[rgb]{0,0,0}$\partial\Omega_{\rm g}$}%
}}}}
\put(10081,-2491){\makebox(0,0)[lb]{\smash{{\SetFigFont{10}{12.0}{\rmdefault}{\mddefault}{\updefault}{\color[rgb]{0,0,0}$\Sigma=\partial\Omega_{\rm g}={\overline{\Omega}}_{\rm s}\cap{\overline{\Omega}}_{\rm g}$}%
}}}}
\put(10081,-3301){\makebox(0,0)[lb]{\smash{{\SetFigFont{10}{12.0}{\rmdefault}{\mddefault}{\updefault}{\color[rgb]{0,0,0}$O:=\interior(\conv(\overline{\Omega}))\setminus\overline{\Omega}$}%
}}}}
\put(10081,-5551){\makebox(0,0)[lb]{\smash{{\SetFigFont{10}{12.0}{\rmdefault}{\mddefault}{\updefault}{\color[rgb]{0,0,0}$\Gamma_{\rm ph}:=\partial\conv(\overline{\Omega})\cap\partial O$}%
}}}}
\put(10081,-6271){\makebox(0,0)[lb]{\smash{{\SetFigFont{10}{12.0}{\rmdefault}{\mddefault}{\updefault}{\color[rgb]{0,0,0}$\Gamma:=\partial O=\Gamma_\Omega\cup\Gamma_{\rm ph}$}%
}}}}
\put(10081,-4831){\makebox(0,0)[lb]{\smash{{\SetFigFont{10}{12.0}{\rmdefault}{\mddefault}{\updefault}{\color[rgb]{0,0,0}$\Gamma_\Omega:=\overline{\Omega}\cap\overline{O}$}%
}}}}
\put(10531,-4021){\makebox(0,0)[lb]{\smash{{\SetFigFont{10}{12.0}{\rmdefault}{\mddefault}{\updefault}{\color[rgb]{0,0,0}$=O_1\cup O_2$}%
}}}}
\end{picture}%
  \end{center}
  \caption{For the domain of Fig.\ \ref{fig:domain}, the surfaces of 
    radiation regions are shown. The open radiation regions 
    $O_1$ and $O_2$ are artificially closed by the phantom closure
    $\Gamma_{\rm ph}$. 
    As in Fig.\ \ref{fig:domain}, Fig.\ \ref{fig:openRad} depicts a 
    2-dimensional section through the $3$-dimensional domain.}\label{fig:openRad}
\end{figure}

On parts of $\partial\Omega$ that do not interact radiatively with
other parts of the apparatus, i.e.\ on 
$\partial\Omega\setminus\Gamma_{\Omega}$, 
the Stefan-Boltzmann law provides the outer boundary condition 
\begin{equation}
  \label{eq:outerrad}
  \kappa_{\rm s}\grad\theta\cdot\,\vec n_{\rm s}
  -\sigma\,\epsilon(\theta)\,
  (\theta_{\rm ext}^4-\,\theta^4)=0\q
  \text{on $]0,T[\times\big(\partial\Omega\setminus\Gamma_{\Omega}\big)$}.
\end{equation}

\subsection{Initial Condition}

The initial condition reads $\theta(0,x)=\theta_{\rm init}(x)$, $x\in\Omega$,
where it is assumed that 
\begin{enumerate}[({A}-1)]
  \setcounter{enumi}{\thephilipenumi}
\item\label{item:initial}
  $\theta_{\rm init}\in L^\infty(\Omega,\mathbb R^+)$.
  \setcounter{philipenumi}{\value{enumi}}
\end{enumerate}

\section{The Finite Volume Scheme}\label{sec:fvm}

In this section, the finite volume scheme is formulated. For a
detailed derivation of the scheme see \cite[Sec.\ 3]{KP-05},
except that, where the emissivities are evaluated at the temperature
of the previous time step in \cite[Sec.\ 3]{KP-05}, one has to use the 
temperature of the current time step to obtain the fully implicit
finite volume scheme considered in the following.

\subsection{Discretization of Time and Space Domain}

A discretization of the time domain $[0,T]$ is given by an increasing
finite sequence $0=t_0<\dots<t_N=T$, $N\in\mathbb N$. 
The notation $k_\nu:=t_\nu-t_{\nu-1}$ will be used for the time steps. 

An admissible discretization of the space domain $\Omega$ is given by a finite family 
$\mathcal T:=(\omega_i)_{i\in I}$ of subsets of $\Omega$ satisfying a
number of assumptions, subsequently denoted by (DA-$*$).
\begin{enumerate}[({DA}-1)]
\item\label{item:spdis} 
  $\mathcal T=(\omega_i)_{i\in I}$ forms a partition of $\Omega$,
  and, for each $i\in I$, 
  $\omega_i$ is a nonvoid, polyhedral, connected, and open subset of $\Omega$. 
  \setcounter{philipenumida}{\value{enumi}}
\end{enumerate}
From $\mathcal T$, one can define discretizations of $\Omega_{\rm s}$ 
and $\Omega_{\rm g}$: For $m\in\{{\rm s,g}\}$ and $i\in I$, let 
\begin{align}
  \label{eq:omegakjdef}
  \omega_{m,i}:=\omega_i\cap\Omega_m,\q
  I_m:=\big\{j\in I:\,\omega_{m,j}\not=\emptyset\big\},\q
  \mathcal T_m:=(\omega_{m,i})_{i\in I_m}.
\end{align}

\begin{enumerate}[({DA}-1)]
  \setcounter{enumi}{\thephilipenumida}
\item\label{eq:nontangentdef} 
  For each $i\in I$: 
  $\dreg\omega_{{\rm s},i}\cap\Sigma
  =\dreg\omega_{{\rm g},i}\cap\Sigma$,
  where $\dreg$ denotes the regular boundary of a polyhedral set, i.e.\ the
  parts of the boundary, where a unique outer unit normal vector
  exists, $\dreg\emptyset:=\emptyset$. 
  \setcounter{philipenumida}{\value{enumi}}
\end{enumerate}

The boundary of each control volume $\omega_{m,i}$ can be
decomposed according to 
\begin{subequations}
  \label{subeq:bnddecom}
  \begin{equation}
    \label{eq:bnddecom}
    \partial\omega_{m,i}
    =\big(\partial\omega_{m,i}\cap\,\Omega_m\big)
    \;\cup\;
    \big(\partial\omega_{m,i}\cap\partial\Omega\big)
    \;\cup\;
    \big(\partial\omega_{m,i}
    \cap\Sigma\big).
  \end{equation}
  Recalling \aref{item:aomega}, \aref{eq:bndeqbnds}, and Def.\ \ref{def:o}, 
  outer boundary sets are decomposed further into 
  \begin{equation}
    \label{eq:bnddecomouter}
    \partial\omega_{{\rm s},i}\cap\partial\Omega
    =\big(\partial\omega_{{\rm s},i}\cap\Gamma_\Omega\big)
    \cup\big(\partial\omega_{{\rm s},i}\cap(\partial\Omega\setminus\Gamma_\Omega)\big),
  \end{equation} 
  whereas $\partial\omega_{{\rm g},i}\cap\partial\Omega=\emptyset$. 
\end{subequations}

Associate a discretization point $x_i\in\overline\omega_i$ with 
each control volume $\omega_i$ (cf.\ \cite{FL-00}). Then $\theta_{\nu,i}$ can be
interpreted as $\theta(t_\nu,x_i)$. Moreover, the 
discretization makes use of 
regularity assumptions concerning the partition $(\omega_i)_{i\in I}$
that can be expressed in terms of the $x_i$:
\begin{enumerate}[({DA}-1)]
  \setcounter{enumi}{\thephilipenumida}
\item\label{eq:xkinallj} 
  For each $m\in\{{\rm s,g}\}$, $i\in I_m$: 
  $x_i\in\overline\omega_{m,i}$. In particular,
  if $\omega_{{\rm s},i}\not=\emptyset$ and 
  $\omega_{{\rm g},i}\not=\emptyset$, then 
  $x_i\in\overline\omega_{{\rm s},i}\cap\overline\omega_{{\rm g},i}$.
\item\label{eq:xkinbnd}
  For each $i\in I$, the following holds: 
  If $\lambda_{2}(\overline\omega_i\cap\Gamma_\Omega)\not=0$,
  then $x_i\in\overline\omega_i\cap\Gamma_\Omega$;
  and, if $\lambda_{2}\big(\overline\omega_i\cap(\partial\Omega\setminus\Gamma_\Omega)\big)\not=0$,
  then $x_i\in\overline\omega_i\cap\overline{\partial\Omega\setminus\Gamma_\Omega}$.
  \setcounter{philipenumida}{\value{enumi}}
\end{enumerate}
\begin{remark}
  \label{rem:admomega}
  By \aref{eq:bndeqbnds}, \daref{eq:nontangentdef},
  \daref{eq:xkinallj}, \daref{eq:xkinbnd}, 
  $\overline\omega_i$ can {\em not} have 2-dimensional
  intersections with both $\partial\Omega$ and $\Sigma$. 
\end{remark}

Introducing the sets
\begin{subequations}
  \label{subeq:nbs}
  \begin{align}
    \label{eq:nbsm}
    \nb_m(i)&:=\{j\in I_m\setminus\{i\}:\,\lambda_{2}(\partial\omega_{m,i}\cap\partial\omega_{m,j})\not=0\},\\
    \nb(i)&:=\{j\in I\setminus\{i\}:\,\lambda_{2}(\partial\omega_i\cap\partial\omega_j)\not=0\},
    \label{eq:nbs}
  \end{align}
\end{subequations}
$\partial\omega_{m,i}\cap\,\Omega_m$
is partitioned further: 
\begin{equation}
  \label{eq:partintonbs}
  \partial\omega_{m,i}\cap\,\Omega_m
  =\bigcup_{j\in\nb_m(i)}\partial\omega_{m,i}\cap\partial\omega_{m,j},
\end{equation}
where it is assumed that: 
\begin{enumerate}[({DA}-1)]
  \setcounter{enumi}{\thephilipenumida}
\item\label{eq:poldisf} 
  For each $i\in I$, $j\in\nb(i)$:
  $x_i\not=x_j$ and 
  $\frac{x_j-x_i}
  {\left\|x_i-x_j\right\|_2}
  =\vec n_{\omega_i}\res{\partial\omega_i\cap\partial\omega_j}$,
  where $\|\cdot\|_2$ denotes Euclidean distance, 
  and $\vec n_{\omega_i}\res{\partial\omega_i\cap\partial\omega_j}$ is the restriction
  of the normal vector $\vec n_{\omega_i}$ to the interface 
  $\partial\omega_i\cap\partial\omega_j$. 
  Thus, the line segment joining neighboring vertices $x_i$ and $x_j$
  is always perpendicular to 
  $\partial\omega_i\cap\partial\omega_j$.
  \setcounter{philipenumida}{\value{enumi}}
  \addtocounter{philipenumida}{-1}
\end{enumerate}

\subsection{Discretization of Nonlocal Radiation Terms}\label{sec:nonlocdis}

\begin{enumerate}[({DA}-1)]
  \setcounter{enumi}{\thephilipenumida}
\item\label{eq:zetaalpha} 
  For a chosen fixed index ``ph'', 
  $(\zeta_{\alpha})_{\alpha\in I_\Omega}$
  and $(\zeta_{\alpha})_{\alpha\in I_\Sigma}$
  are finite partitions 
  of $\Gamma_\Omega$ and $\Sigma$, respectively, where
  \begin{equation}
    \label{eq:eqzetaalpha}
    I_\Omega\cap I_\Sigma=\emptyset,\q\q 
    \ph\notin I_\Omega\cup I_\Sigma,
  \end{equation}
  and, for each $\alpha\in I_\Omega$ (resp.\ $\alpha\in I_\Sigma$), 
  the boundary element
  $\zeta_\alpha$ is a nonvoid, polyhedral, connected, and 
  (relatively) open subset of $\Gamma_\Omega$ (resp.\ $\Sigma$),
  lying in a $2$-dimensional affine subspace of $\mathbb R^3$. 
  For the convenience of subsequent concise notation, let
  $\zeta_{\ph}:=\Gamma_{\ph}$
  and $I_\Gamma:=I_\Omega\dju\,\{\ph\}$. 
  \setcounter{philipenumida}{\value{enumi}}
\end{enumerate}
On both $\Gamma_{\Omega}$ and $\Sigma$, the boundary elements are supposed to be compatible
with the control volumes $\omega_i$:
\begin{enumerate}[({DA}-1)]
  \setcounter{enumi}{\thephilipenumida}
\item\label{eq:zetaomega} 
  For each $\alpha\in I_\Omega$ (resp.\ $\alpha\in I_\Sigma$), there is 
  a unique $i(\alpha)\in I$ such that 
  $\zeta_\alpha\subseteq\partial\omega_{i(\alpha)}\cap\Gamma_\Omega$
  (resp.\ $\zeta_\alpha\subseteq\partial\omega_{{\rm s},i(\alpha)}\cap\Gamma_\Sigma$).
  Moreover, for each $\alpha\in I_\Omega\dju I_\Sigma$:
  $x_{i(\alpha)}\in\overline\zeta_\alpha$. 
  \setcounter{philipenumida}{\value{enumi}}
\end{enumerate}
For each $i\in I$, define 
$J_{\Omega,i}:=\{\alpha\in I_\Omega:\,\lambda_2(\zeta_{\alpha}\cap\partial\omega_i)\not=0\}$
and $J_{\Sigma,i}:=\{\alpha\in I_\Sigma:\,\lambda_2(\zeta_{\alpha}\cap\partial\omega_{{\rm s},i})\not=0\}$.
\begin{remark}\label{rem:ii}
  As a consequence of \daref{item:spdis}, \daref{eq:zetaalpha}, and \daref{eq:zetaomega}, 
  the family $(\zeta_{\alpha}\cap\partial\omega_i)_{\alpha\in J_{\Omega,i}}$ 
  is a partition of $\partial\omega_i\cap\Gamma_{\Omega}=\partial\omega_{{\rm s},i}\cap\Gamma_{\Omega}$ and
  $(\zeta_{\alpha}\cap\partial\omega_{{\rm s},i})_{\alpha\in J_{\Sigma,i}}$ 
  is a partition of $\partial\omega_{{\rm s},i}\cap\Sigma=\overline\omega_i\cap\Sigma$.
  Moreover, \aref{eq:bndeqbnds} implies that at most
  one of the two sets $J_{\Omega,i}$, $J_{\Sigma,i}$ can be nonvoid
  (cf.\ Rem.\ \ref{rem:admomega} above). 
\end{remark}

Let
\begin{equation}
  \label{eq:lzetazetatildef}
  \Lambda_{\alpha,\beta}
  :=\int_{\zeta_{\alpha}\times\zeta_{\beta}}
  \Lambda\,\omega\q\q
  \text{for all $(\alpha,\beta)\in (I_\Sigma\times I_\Sigma)\cup(I_\Gamma\times I_\Gamma)$}.
\end{equation}
The $\Lambda_{\alpha,\beta}$ are nonnegative since $\Lambda\omega$ is nonnegative according to Th.\ \ref{th:kop}\eqref{item:thkopa}.
The forms of $\Lambda$ and $\omega$ imply the symmetry condition $\Lambda_{\alpha,\beta}=\Lambda_{\beta,\alpha}$;
and \eqref{eq:itemthkopa} (resp.\ its analog, where $\Sigma$ is replaced by $\Gamma=\Gamma_{\Omega}\cup\Gamma_{\rm ph}$) implies
\begin{equation}
  \label{eq:conserforlambdazeta}
  \sum_{\beta\in I_\Sigma}\Lambda_{\alpha,\beta}
  =\lambda_{2}({\zeta_{\alpha}})\q
  \text{for all $\alpha\in I_\Sigma$},\q\q
  \sum_{\beta\in I_\Gamma}\Lambda_{\alpha,\beta}
  =\lambda_{2}({\zeta_{\alpha}})\q
  \text{for all $\alpha\in I_\Gamma$}.
\end{equation}
Define vector-valued functions
\begin{subequations}
  \label{subeq:radmatform}
  {\allowdisplaybreaks
  \begin{align}
    \label{eq:radmatformy}
    \mbox{}
    \vec E_\Gamma&:\,(\mathbb R^+_0)^{I_\Omega}
    \longrightarrow(\mathbb R^+_0)^{I_\Omega},&
    \vec E_\Gamma(\vec u)
    =&\big(E_{\Gamma,\alpha}(\vec u)\big)_{\alpha\in I_\Omega},\nonumber\\*
    \vec E_\Sigma&:\,(\mathbb R^+_0)^{I_\Sigma}
    \longrightarrow(\mathbb R^+_0)^{I_\Sigma},&
    \vec E_\Sigma(\vec u)
    =&\big(E_{\Sigma,\alpha}(\vec u)\big)_{\alpha\in I_\Sigma},\nonumber\\*
    & &
    \hspace{-70pt}
    E_{\Gamma,\alpha}(\vec u)
    :=E_{\Sigma,\alpha}(\vec u)
    :=&\sigma\,\epsilon(u_\alpha)\, 
    u_\alpha^4\,\lambda_{2}({\zeta_{\alpha}}),\\
    \vec E_{\rm ph}&:\,(\mathbb R^+_0)^{I_\Omega}
    \longrightarrow(\mathbb R^+_0)^{I_\Omega},&
    \vec E_{\rm ph}(\vec u)
    =&\big(E_{\rm ph,\alpha}(\vec u)\big)_{\alpha\in I_\Omega},\nonumber\\*
    & &
    E_{\rm ph,\alpha}(\vec u)
    :=&\sigma\,\big(1-\epsilon(u_\alpha)\big)\,
    \theta_{\rm ext}^4\,\Lambda_{\alpha,\ph},
    \label{eq:radmatformyph}
  \end{align}}%
  and matrix-valued functions
  \begin{align}
    \vec G_\Gamma&:\,(\mathbb R^+_0)^{I_\Omega}\longrightarrow\mathbb R^{I_\Omega^2},&
    \vec G_\Gamma(\vec u)
    =&\big(G_{\Gamma,\alpha,\beta}(\vec u)\big)_{(\alpha,\beta)\in I_\Omega^2},\nonumber\\
    \vec G_\Sigma&:\,(\mathbb R^+_0)^{I_\Sigma}\longrightarrow\mathbb R^{I_\Sigma^2},&
    \vec G_\Sigma(\vec u)
    =&\big(G_{\Sigma,\alpha,\beta}(\vec u)\big)_{(\alpha,\beta)\in I_\Sigma^2},\nonumber\\
    & &
    \hspace{-80pt}
    G_{\Gamma,\alpha,\beta}(\vec u)
    :=G_{\Sigma,\alpha,\beta}(\vec u)
    :=&
    \begin{cases}
      \lambda_{2}({\zeta_{\alpha}})
      -\big(1-\epsilon(u_\alpha)\big)\,\Lambda_{\alpha,\beta}
      &\text{for $\alpha=\beta$},\\
      \hphantom{\lambda_{2}({\zeta_{\alpha}})}
      -\big(1-\epsilon(u_\alpha)\big)\,\Lambda_{\alpha,\beta}
      &\text{for $\alpha\not=\beta$}.
    \end{cases}
    \label{eq:newadef}
  \end{align}
\end{subequations}
\begin{lemma}
  \label{lem:mmatrix}
  The following holds for each $\vec u\in(\mathbb R^+_0)^{I_\Omega}$:
  \begin{enumerate}[{\bf(a)}]
  \item\label{item:mmatdiagdom} For each $\alpha\in I_\Omega$:
    $\sum_{\beta\in I_\Omega\setminus\{\alpha\}}
    |G_{\Gamma,\alpha,\beta}(\vec u)|\leq (1-\epsilon(u_\alpha))\,G_{\Gamma,\alpha,\alpha}(\vec u)
    < G_{\Gamma,\alpha,\alpha}(\vec u)$. In particular, $\vec G_\Gamma(\vec u)$ is strictly diagonally
    dominant. 
  \item\label{item:mmatmmat} $\vec G_\Gamma(\vec u)$ is an M-matrix, i.e.\ 
    $\vec G_\Gamma(\vec u)$ is invertible, $\vec G_\Gamma^{-1}(\vec u)$ is nonnegative, 
    and, for each $(\alpha,\beta)\in I_\Omega^2$ such that $\alpha\not=\beta$:
    $G_{\Gamma,\alpha,\beta}(\vec u)\leq0$.
  \end{enumerate}
  Analogous statements hold for $\vec G_\Sigma$.
\end{lemma}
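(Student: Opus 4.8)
The plan is to read everything off the explicit formula \eqref{eq:newadef} for the entries of $\vec G_\Gamma(\vec u)$, using only three facts available above: the nonnegativity of the $\Lambda_{\alpha,\beta}$, the conservation relation \eqref{eq:conserforlambdazeta}, and $\epsilon\in\,]0,1]$ from \aref{item:em} (so that $0\leq 1-\epsilon(u_\alpha)<1$ for every $\alpha$). For part \eqref{item:mmatdiagdom}, I would first note that for $\alpha\neq\beta$ the entry $G_{\Gamma,\alpha,\beta}(\vec u)=-\big(1-\epsilon(u_\alpha)\big)\Lambda_{\alpha,\beta}$ is a product of the nonnegative factors $1-\epsilon(u_\alpha)$ and $\Lambda_{\alpha,\beta}$, so $|G_{\Gamma,\alpha,\beta}(\vec u)|=\big(1-\epsilon(u_\alpha)\big)\Lambda_{\alpha,\beta}$ and
\[
  \sum_{\beta\in I_\Omega\setminus\{\alpha\}}|G_{\Gamma,\alpha,\beta}(\vec u)|
  =\big(1-\epsilon(u_\alpha)\big)\sum_{\beta\in I_\Omega\setminus\{\alpha\}}\Lambda_{\alpha,\beta}.
\]
Since $I_\Gamma=I_\Omega\dju\{\ph\}$, the conservation relation \eqref{eq:conserforlambdazeta} gives $\sum_{\beta\in I_\Omega\setminus\{\alpha\}}\Lambda_{\alpha,\beta}=\lambda_2(\zeta_\alpha)-\Lambda_{\alpha,\alpha}-\Lambda_{\alpha,\ph}$, whereas $G_{\Gamma,\alpha,\alpha}(\vec u)=\lambda_2(\zeta_\alpha)-\big(1-\epsilon(u_\alpha)\big)\Lambda_{\alpha,\alpha}$. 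Subtracting, the difference $\big(1-\epsilon(u_\alpha)\big)G_{\Gamma,\alpha,\alpha}(\vec u)-\sum_{\beta\in I_\Omega\setminus\{\alpha\}}|G_{\Gamma,\alpha,\beta}(\vec u)|$ works out to $\big(1-\epsilon(u_\alpha)\big)\big(\Lambda_{\alpha,\ph}+\epsilon(u_\alpha)\Lambda_{\alpha,\alpha}\big)$, a product of nonnegative factors, so the bound $\sum_{\beta\in I_\Omega\setminus\{\alpha\}}|G_{\Gamma,\alpha,\beta}(\vec u)|\leq\big(1-\epsilon(u_\alpha)\big)G_{\Gamma,\alpha,\alpha}(\vec u)$ of part \eqref{item:mmatdiagdom} follows.

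For the strict inequality and hence strict diagonal dominance, I would establish $G_{\Gamma,\alpha,\alpha}(\vec u)>0$: because $\Lambda_{\alpha,\alpha}\leq\lambda_2(\zeta_\alpha)$ by \eqref{eq:conserforlambdazeta} and $0\leq 1-\epsilon(u_\alpha)<1$, one gets $\big(1-\epsilon(u_\alpha)\big)\Lambda_{\alpha,\alpha}<\lambda_2(\zeta_\alpha)$ when $\Lambda_{\alpha,\alpha}>0$, while $G_{\Gamma,\alpha,\alpha}(\vec u)=\lambda_2(\zeta_\alpha)>0$ when $\Lambda_{\alpha,\alpha}=0$ (as $\zeta_\alpha$ has positive $\lambda_2$-measure by \daref{eq:zetaalpha}). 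With $G_{\Gamma,\alpha,\alpha}(\vec u)>0$ and $\epsilon(u_\alpha)>0$, the claim $\big(1-\epsilon(u_\alpha)\big)G_{\Gamma,\alpha,\alpha}(\vec u)<G_{\Gamma,\alpha,\alpha}(\vec u)$ is immediate, and chaining the two inequalities yields strict diagonal dominance.

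For part \eqref{item:mmatmmat}, the sign condition $G_{\Gamma,\alpha,\beta}(\vec u)\leq 0$ for $\alpha\neq\beta$ was already observed. To obtain invertibility together with a nonnegative inverse I would write $\vec G_\Gamma(\vec u)=\vec D-\vec B$, where $\vec D$ is the diagonal part carrying the positive entries $G_{\Gamma,\alpha,\alpha}(\vec u)$ and $\vec B\geq 0$ collects the absolute values of the off-diagonal entries, so that $\vec G_\Gamma(\vec u)=\vec D\,(\id-\vec D^{-1}\vec B)$. By part \eqref{item:mmatdiagdom}, the $\alpha$-th row sum of the nonnegative matrix $\vec D^{-1}\vec B$ is at most $1-\epsilon(u_\alpha)$, hence $\|\vec D^{-1}\vec B\|_\infty\leq\max_{\alpha\in I_\Omega}\big(1-\epsilon(u_\alpha)\big)<1$, the maximum over the finite index set $I_\Omega$ of numbers each strictly below $1$. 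Consequently $\id-\vec D^{-1}\vec B$ is invertible with Neumann-series inverse $\sum_{n\geq 0}(\vec D^{-1}\vec B)^n$, a convergent sum of nonnegative matrices and therefore nonnegative; then $\vec G_\Gamma^{-1}(\vec u)=(\id-\vec D^{-1}\vec B)^{-1}\vec D^{-1}$ is a product of nonnegative matrices, completing \eqref{item:mmatmmat}.

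The statement for $\vec G_\Sigma$ would follow by the same argument, the only change being that $I_\Sigma$ carries no phantom index: the relevant conservation identity is $\sum_{\beta\in I_\Sigma}\Lambda_{\alpha,\beta}=\lambda_2(\zeta_\alpha)$, so the difference computed in part \eqref{item:mmatdiagdom} reduces to $\big(1-\epsilon(u_\alpha)\big)\,\epsilon(u_\alpha)\Lambda_{\alpha,\alpha}\geq 0$ and the strictness is supplied entirely by $\epsilon>0$. I expect the only genuine content—rather than bookkeeping with \eqref{eq:newadef} and \eqref{eq:conserforlambdazeta}—to be the nonnegativity of the inverse in \eqref{item:mmatmmat}; the uniform row-sum bound $\max_{\alpha}\big(1-\epsilon(u_\alpha)\big)<1$, which hinges on $I_\Omega$ being finite and on $\epsilon$ being strictly positive, is exactly what makes the Neumann-series route (equivalently, the standard nonsingular-M-matrix characterization) go through.
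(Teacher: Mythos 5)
Your proof is correct, and it is the standard argument: the diagonal-dominance computation is forced by \eqref{eq:newadef} together with \eqref{eq:conserforlambdazeta}, and the Jacobi splitting $\vec G_\Gamma(\vec u)=\vec D(\id-\vec D^{-1}\vec B)$ with the Neumann series is the usual route from strict diagonal dominance plus the sign pattern to the M-matrix property. The paper itself gives no details here but simply defers to the analogous Lemma 3.1 of \cite{KP-05}, whose proof proceeds along the same lines, so your write-up is a faithful self-contained version of what the reference supplies.
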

\begin{proof}
  The proof is completely analogous to the proof of \cite[Lem.\ 3.1]{KP-05}.
\end{proof}

Using Lem.\ \ref{lem:mmatrix}, one can define the vector-valued functions
\begin{subequations}
  \label{subeq:rdef}
  \begin{align}
    \label{eq:xisainy}
    \vec R_\Gamma&:\,(\mathbb R^+_0)^{I_\Omega}\longrightarrow(\mathbb R^+_0)^{I_\Omega},&
    \vec R_\Gamma(\vec u)&:=\vec G_\Gamma^{-1}(\vec u)\,
    \big(\vec E_\Gamma(\vec u)+\vec E_{\rm ph}(\vec u)\big),\\
    \vec R_\Sigma&:\,(\mathbb R^+_0)^{I_\Sigma}\longrightarrow(\mathbb R^+_0)^{I_\Sigma},&
    \vec R_\Sigma(\vec u)
    &:=\vec G_\Sigma^{-1}(\vec u)\,\vec E_\Sigma(\vec u),\\
    \vec V_\Gamma&:\,(\mathbb R^+_0)^{I_\Omega}\longrightarrow(\mathbb R^+_0)^{I_\Omega},&
    \vec V_\Gamma(\vec u)&=\big(V_{\Gamma,\alpha}(\vec u)\big)_{\alpha\in I_\Omega},\nonumber\\
    \label{eq:nonlocsprintds}
    V_{\Gamma,\alpha}(\vec u)
    &:=\epsilon(u_\alpha)
    \sum_{\beta\in I_\Omega}R_{\Gamma,\beta}(\vec u)\,\Lambda_{\alpha,\beta}
    +\sigma\,\epsilon(u_\alpha)\,\theta_{\rm ext}^4\,\Lambda_{\alpha,\ph},\hspace{-90pt}& &\\[2mm]
    \vec V_\Sigma&:\,(\mathbb R^+_0)^{I_\Sigma}\longrightarrow(\mathbb R^+_0)^{I_\Sigma},&
    \vec V_\Sigma(\vec u)
    &=\big(V_{\Sigma,\alpha}(\vec u)\big)_{\alpha\in I_\Sigma},\nonumber\\
    V_{\Sigma,\alpha}(\vec u)&:=\epsilon(u_\alpha)\sum_{\beta\in I_\Sigma}
    R_{\Sigma,\beta}(\vec u)\,\Lambda_{\alpha,\beta}.& &
  \label{eq:disradtermsg}
  \end{align}
\end{subequations}

The following Lem.\ \ref{lem:nonnegrad} corresponds to 
\cite[Lem.\ 3.2]{KP-05} and provides a maximum principle as well as
local Lipschitzness for the functions $\vec R_\Gamma$, $\vec V_\Gamma$, $\vec R_\Sigma$, and $\vec V_\Sigma$.
Only part \eqref{item:nonnegradb} of Lem.\ \ref{lem:nonnegrad} is significantly different
from its counterpart in \cite[Lem.\ 3.2]{KP-05}. 
The following notation is introduced for
$\vec u=(u_i)_{i\in I}\in\mathbb R^I$ (where $I$ can be an arbitrary,
nonempty, finite index set):
\begin{equation}
  \label{eq:minandmax}
  \vecmin{\vec u}:=\min\{u_i:\,i\in I\},\q
  \vecmax{\vec u}:=\max\{u_i:\,i\in I\}.
\end{equation}
\begin{lemma}
  \label{lem:nonnegrad}
  \begin{enumerate}[{\rm\bf(a)}]
  \item\label{item:nonnegrada}
    The functions $\vec R_\Gamma$, $\vec V_\Gamma$, 
    $\vec R_\Sigma$, and $\vec V_\Sigma$ are all nonnegative.
  \item\label{item:nonnegradzero}
    For each $\vec u\in(\mathbb R^+_0)^{I_\Omega}$,
    $\alpha\in I_\Omega$: 
    \begin{subequations}     
      \begin{align}
        \sigma\,\min\big\{\vecmin{\vec u}^4,\theta_{\rm ext}^4\big\}
        \leq\mbox{}& R_{\Gamma,\alpha}(\vec u)
        \leq\sigma\,\max\big\{\vecmax{\vec u}^4,\theta_{\rm ext}^4\big\},\label{eq:nonnegradrevrg}\\ 
        \sigma\,\epsilon(u_\alpha)\,
        \min\big\{\vecmin{\vec u}^4,\theta_{\rm ext}^4\big\}\,
        \lambda_2(\zeta_\alpha) 
        \leq\mbox{}& V_{\Gamma,\alpha}(\vec u)\nonumber\\
        \leq\mbox{}&\sigma\,\epsilon(u_\alpha)\,\max\big\{\vecmax{\vec u}^4,\theta_{\rm ext}^4\big\}\,
        \lambda_2(\zeta_\alpha),\label{eq:nonnegradrevrgv}\\ 
        \intertext{and, for each $\vec u\in(\mathbb R^+_0)^{I_\Sigma}$,
          $\alpha\in I_\Sigma$:} 
        \sigma\,\vecmin{\vec u}^4
        \leq\mbox{}& R_{\Sigma,\alpha}(\vec u)
        \leq\sigma\,\vecmax{\vec u}^4,\label{eq:nonnegradrevrs}\\ 
        \sigma\,\epsilon(u_\alpha)\,\vecmin{\vec u}^4\,\lambda_2(\zeta_\alpha) \leq\mbox{}
        & 
        V_{\Sigma,\alpha}(\vec u)
        \leq\sigma\,\epsilon(u_\alpha)\,\vecmax{\vec u}^4\,
        \lambda_2(\zeta_\alpha).\label{eq:nonnegradrevrsv}
      \end{align}
    \end{subequations}
  \item\label{item:nonnegradb}
    For each $r\in\mathbb R^+$,
    with respect to the max-norm, 
    the maps $R_{\Gamma,\alpha}$, $V_{\Gamma,\alpha}$ are Lip\-schitz on $[0,r]^{I_\Omega}$,
    and the maps $R_{\Sigma,\alpha}$,
    and $V_{\Sigma,\alpha}$ are Lip\-schitz on $[0,r]^{I_{\Sigma}}$.
    More precisely,
    recalling $\epsilon_{\min,r}\in\mathbb R^+$ from Rem.\ {\rm\ref{rem:epsminr}} and
    $L_{\epsilon,r}$ from \aref{item:em}, the Lipschitz constants are
    \begin{subequations}
      \label{subeq:nonnegradb}
      \begin{align}
        &\sigma\,\epsilon_{\min,r}^{-1}\,\Big(4\,r^3
          +L_{\epsilon,r}\,\big(r^4+\max\{r^4,\theta_{\rm ext}^4\}\big)\Big)&
        &\text{for $\vec R_\Gamma$},\label{eq:nonnegradbrg}\\
        &\sigma\,\lambda_2(\zeta_\alpha)\,
        \Big(
        4\,\epsilon(u_\alpha)\,\epsilon_{\min,r}^{-1}\,r^3\nonumber\\
        &\hspace{50pt}+
        \max\{r^4,\theta_{\rm ext}^4\}\,
        L_{\epsilon,r}\,
        \big(2\,\epsilon(u_\alpha)\,\epsilon_{\min,r}^{-1}+1\big)
        \Big)&
        &\text{for $V_{\Gamma,\alpha}$},\label{eq:nonnegradbvg}\\
        &\sigma\,\epsilon_{\min,r}^{-1}\,\big(4\,r^3+2\,L_{\epsilon,r}\,r^4\big)\,&
        &\text{for $\vec R_\Sigma$},\label{eq:nonnegradbrs}\\
        &\sigma\,\lambda_2(\zeta_\alpha)\,
        \Big(
        4\,\epsilon(u_\alpha)\,\epsilon_{\min,r}^{-1}\,r^3
        +
        r^4\,L_{\epsilon,r}\,
        \big(2\,\epsilon(u_\alpha)\,\epsilon_{\min,r}^{-1}+1\big)
        \Big)&
        &\text{for $V_{\Sigma,\alpha}$}.\label{eq:nonnegradbvs}
      \end{align}
    \end{subequations}
  \end{enumerate}
\end{lemma}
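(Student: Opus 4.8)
The plan is to handle the three parts in order, throughout exploiting the M-matrix structure supplied by Lem.\ \ref{lem:mmatrix} and the conservation identity \eqref{eq:conserforlambdazeta}, and — crucially — nowhere differentiating or estimating the matrix inverse directly. For \eqref{item:nonnegrada} I would first observe that $\vec E_\Gamma(\vec u)$, $\vec E_{\rm ph}(\vec u)$, and $\vec E_\Sigma(\vec u)$ are all nonnegative, since $\sigma>0$, $\theta_{\rm ext}>0$, $\epsilon\in\,]0,1]$ by \aref{item:em}, and the $\Lambda_{\alpha,\beta}$ and $\lambda_2(\zeta_\alpha)$ are nonnegative. Because Lem.\ \ref{lem:mmatrix}\eqref{item:mmatmmat} gives that $\vec G_\Gamma^{-1}(\vec u)$ and $\vec G_\Sigma^{-1}(\vec u)$ are nonnegative, the products defining $\vec R_\Gamma$ and $\vec R_\Sigma$ are nonnegative; then $\vec V_\Gamma$ and $\vec V_\Sigma$ are nonnegative as sums of products of nonnegative quantities.

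The device underlying both \eqref{item:nonnegradzero} and \eqref{item:nonnegradb} is the component-wise reproduction identity obtained from $\vec G_\Gamma(\vec u)\,\vec R_\Gamma(\vec u)=\vec E_\Gamma(\vec u)+\vec E_{\rm ph}(\vec u)$: dividing the $\alpha$-th row by $\lambda_2(\zeta_\alpha)$ and regrouping the diagonal and off-diagonal $\Lambda$-terms yields
\begin{equation*}
  R_{\Gamma,\alpha}(\vec u)
  =\sigma\,\epsilon(u_\alpha)\,u_\alpha^4
  +\frac{1-\epsilon(u_\alpha)}{\lambda_2(\zeta_\alpha)}
  \Big(\sigma\,\theta_{\rm ext}^4\,\Lambda_{\alpha,\ph}
  +\sum_{\beta\in I_\Omega}\Lambda_{\alpha,\beta}\,R_{\Gamma,\beta}(\vec u)\Big),
\end{equation*}
together with the analogous identity on $\Sigma$ without the $\theta_{\rm ext}$-term. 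For \eqref{item:nonnegradzero} I would evaluate this at an index $\alpha^*$ maximizing $R_{\Gamma,\alpha}$, bound every $R_{\Gamma,\beta}$ and $\theta_{\rm ext}^4$ by $\max\{\vecmax{\vec u}^4,\theta_{\rm ext}^4\}$, apply \eqref{eq:conserforlambdazeta} in the form $\Lambda_{\alpha^*,\ph}+\sum_{\beta\in I_\Omega}\Lambda_{\alpha^*,\beta}=\lambda_2(\zeta_{\alpha^*})$, and solve the resulting scalar inequality for $R_{\Gamma,\alpha^*}$; the factor $\epsilon(u_{\alpha^*})+(1-\epsilon(u_{\alpha^*}))\Lambda_{\alpha^*,\ph}/\lambda_2(\zeta_{\alpha^*})$ is strictly positive and cancels, leaving the upper bound. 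The lower bound is symmetric (minimizing index), and the bounds for $V_{\Gamma,\alpha}$, $V_{\Sigma,\alpha}$ follow by inserting the $R$-bounds into their definitions and applying \eqref{eq:conserforlambdazeta} once more.

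Part \eqref{item:nonnegradb} is the crux and is where \aref{item:em} becomes essential. Fixing $\vec u,\vec v\in[0,r]^{I_\Omega}$ and subtracting the two reproduction identities, I would split the right-hand side into a term carrying the increment $R_{\Gamma,\beta}(\vec u)-R_{\Gamma,\beta}(\vec v)$ and two terms carrying $\epsilon(u_\alpha)-\epsilon(v_\alpha)$. Writing $D:=\vecmax{\vec R_\Gamma(\vec u)-\vec R_\Gamma(\vec v)}$ and evaluating at a maximizing index $\alpha^*$, the increment term contributes at most $(1-\epsilon(u_{\alpha^*}))\,D$ (again using $\sum_{\beta}\Lambda_{\alpha^*,\beta}\le\lambda_2(\zeta_{\alpha^*})$), which is absorbed into the left side since $\epsilon(u_{\alpha^*})\ge\epsilon_{\min,r}>0$ by Rem.\ \ref{rem:epsminr}. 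The decisive bookkeeping step is to group the two $\epsilon$-difference terms \emph{together}: their common coefficient is $\lambda_2(\zeta_\alpha)^{-1}\big(\sigma\,\theta_{\rm ext}^4\,\Lambda_{\alpha,\ph}+\sum_\beta\Lambda_{\alpha,\beta}R_{\Gamma,\beta}(\vec v)\big)$, which by part \eqref{item:nonnegradzero} and \eqref{eq:conserforlambdazeta} is bounded by $\sigma\max\{r^4,\theta_{\rm ext}^4\}$ — this precisely avoids a spurious extra $\theta_{\rm ext}^4$ summand and reproduces the stated constant \eqref{eq:nonnegradbrg}; the term $\sigma|\epsilon(u_\alpha)u_\alpha^4-\epsilon(v_\alpha)v_\alpha^4|$ is handled by adding and subtracting $\epsilon(u_\alpha)v_\alpha^4$, yielding the $4r^3$ and $r^4L_{\epsilon,r}$ contributions. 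Dividing by $\epsilon_{\min,r}$ gives the $\vec R_\Gamma$-constant; the $\Sigma$-case is identical with the phantom terms deleted, giving \eqref{eq:nonnegradbrs}. Finally, for $V_{\Gamma,\alpha}(\vec u)=\epsilon(u_\alpha)S_\alpha(\vec u)$ with $S_\alpha(\vec u):=\sum_\beta R_{\Gamma,\beta}(\vec u)\Lambda_{\alpha,\beta}+\sigma\,\theta_{\rm ext}^4\,\Lambda_{\alpha,\ph}$, I would write the difference as $\epsilon(u_\alpha)(S_\alpha(\vec u)-S_\alpha(\vec v))+S_\alpha(\vec v)(\epsilon(u_\alpha)-\epsilon(v_\alpha))$, bound $|S_\alpha(\vec u)-S_\alpha(\vec v)|\le\lambda_2(\zeta_\alpha)$ times the just-proved $\vec R_\Gamma$-constant, bound $S_\alpha(\vec v)\le\sigma\max\{r^4,\theta_{\rm ext}^4\}\lambda_2(\zeta_\alpha)$ via \eqref{eq:nonnegradrevrg}, and use $r^4+\max\{r^4,\theta_{\rm ext}^4\}\le 2\max\{r^4,\theta_{\rm ext}^4\}$ to collapse the two contributions into \eqref{eq:nonnegradbvg}; $V_{\Sigma,\alpha}$ is analogous and produces \eqref{eq:nonnegradbvs}.

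I anticipate the main obstacle to be the implicit, self-referential dependence of $\vec R_\Gamma$ on $\vec u$ through both $\vec G_\Gamma^{-1}(\vec u)$ and the right-hand side. Rather than estimate $\vec G_\Gamma^{-1}(\vec u)-\vec G_\Gamma^{-1}(\vec v)$, the reproduction identity confines all matrix information to the single diagonal-dominance inequality $\sum_\beta\Lambda_{\alpha^*,\beta}\le\lambda_2(\zeta_{\alpha^*})$, so that the self-reference appears only as an absorbable factor $(1-\epsilon(u_{\alpha^*}))<1$. The second delicate point is matching the stated constants exactly, which hinges entirely on grouping the two $\epsilon$-difference terms as described above.
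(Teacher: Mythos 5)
Your proposal is correct and follows essentially the same route as the paper: it works with the componentwise linear system $\vec G_\Gamma(\vec u)\,\vec R_\Gamma(\vec u)=\vec E_\Gamma(\vec u)+\vec E_{\rm ph}(\vec u)$ rather than estimating $\vec G_\Gamma^{-1}$, evaluates at a max-norm-attaining index so the self-referential term is absorbed via the factor $1-\epsilon(u_{\alpha^*})$, bounds the $\epsilon$-difference coefficients by $\sigma\max\{r^4,\theta_{\rm ext}^4\}\,\lambda_2(\zeta_\alpha)$ using part (b) and \eqref{eq:conserforlambdazeta}, and obtains the $V$-constants by the same product split and the $r^4+\max\{r^4,\theta_{\rm ext}^4\}\le 2\max\{r^4,\theta_{\rm ext}^4\}$ collapse. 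The only differences (normalizing the row by $\lambda_2(\zeta_\alpha)$, grouping the two $\epsilon$-difference terms before rather than after estimating) are cosmetic.
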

\begin{proof}
  The proofs of \eqref{item:nonnegrada} and \eqref{item:nonnegradzero} are completely analogous to
  the proofs of \cite[Lem.\ 3.2(a),(b)]{KP-05}.

  \eqref{item:nonnegradb}: Note that \eqref{eq:xisainy} implies
  \begin{equation}
    \label{eq:radmatform}
    \vec G_\Gamma(\vec u)\,\vec R_\Gamma(\vec u)
    =\vec E_\Gamma(\vec u)
    +\vec E_{\rm ph}(\vec u),
  \end{equation}
  or, written in components:
  \begin{equation}
    \label{eq:radiositycylsystem}
    \begin{aligned}
      \mbox{}
      &R_{\Gamma,\alpha}(\vec u)\,\lambda_{2}({\zeta_{\alpha}})
      -\big(1-\epsilon(u_\alpha)\big)
      \sum_{\beta\in I_\Omega}R_{\Gamma,\beta}(\vec u)\,\Lambda_{\alpha,\beta}\\
      &=\sigma\,\epsilon(u_\alpha)\, 
      u_\alpha^4\,\lambda_{2}({\zeta_{\alpha}})
      +\sigma\,\big(1-\epsilon(u_\alpha)\big)\,
      \theta_{\rm ext}^4\,\Lambda_{\alpha,\ph}
    \end{aligned}\q\q\q\q
    (\alpha\in I_\Omega).
  \end{equation}
  The function $\theta\mapsto\theta^4$ is 
  $(4 r^3)$-Lipschitz on $[0,r]$, such that, 
  by \eqref{eq:radiositycylsystem},
  for each $(\vec u,\vec v)\in[0,r]^{I_\Omega}\times[0,r]^{I_\Omega}$,
  $\alpha\in I_\Omega$:
  \begin{align}
    &\left|\vphantom{\sum_{\beta\in I_\Omega}}\big(R_{\Gamma,\alpha}(\vec u)-R_{\Gamma,\alpha}(\vec v)\big)\,\lambda_{2}({\zeta_{\alpha}})\right.\nonumber\\
    &\q
      -
      \sum_{\beta\in I_\Omega}
      \Big(
      \big(1-\epsilon(u_{\alpha})\big)\,R_{\Gamma,\beta}(\vec u)
      -\big(1-\epsilon(v_{\alpha})\big)\,R_{\Gamma,\beta}(\vec v)
      \Big)
      \,\Lambda_{\alpha,\beta}\nonumber\\
    &\left.
      \vphantom{\sum_{\beta\in I_\Omega}}
      \q
      -\sigma\,
      \Big(
      \big(1-\epsilon(u_{\alpha})\big)
      -
      \big(1-\epsilon(v_{\alpha})\big)
      \Big)\,
      \theta_{\rm ext}^4\,\Lambda_{\alpha,\ph}\,
    \right|\nonumber\\[1mm]
    &=
    \sigma\, 
    \big|\epsilon(u_\alpha)\,u_{\alpha}^4-\epsilon(v_\alpha)\,v_{\alpha}^4\big|\,
    \lambda_{2}({\zeta_{\alpha}})
    \overset{\text{\aref{item:em}}}\leq
    \sigma\,\big(4\,r^3+L_{\epsilon,r}\,r^4\big)\,
    |u_{\alpha}-v_{\alpha}|\,\lambda_{2}({\zeta_{\alpha}}).
    \label{eq:olafrlip}
  \end{align}
  Now, let $\alpha\in I_\Omega$ be such that 
  $N_{\max}:=\|\vec R_\Gamma(\vec u)-\vec R_\Gamma(\vec v)\|_{\max}=|R_{\Gamma,\alpha}(\vec u)-R_{\Gamma,\alpha}(\vec v)|$.
  Then one can estimate
  \begin{align}
    \mbox{}
    &\bigg|
    \sigma\,
    \big(\epsilon(v_{\alpha})-\epsilon(u_{\alpha})\big)\,
    \theta_{\rm ext}^4\,\Lambda_{\alpha,\ph}\nonumber\\
    &\q+
    \sum_{\beta\in I_\Omega}
    \Big(
    \big(1-\epsilon(u_{\alpha})\big)\,R_{\Gamma,\beta}(\vec u)
    -\big(1-\epsilon(v_{\alpha})\big)\,R_{\Gamma,\beta}(\vec v)
    \Big)
    \,\Lambda_{\alpha,\beta}
    \bigg|\nonumber\\
    &\leq
    \sigma\,
    L_{\epsilon,r}\,\|\vec u-\vec v\|_{\max}\,
    \theta_{\rm ext}^4\,\Lambda_{\alpha,\ph}\nonumber\\
    &\q+
    \bigg|
    \sum_{\beta\in I_\Omega}
    \Big(
    \big(1-\epsilon(u_{\alpha})\big)\,R_{\Gamma,\beta}(\vec u)
    -\big(1-\epsilon(u_{\alpha})\big)\,R_{\Gamma,\beta}(\vec v)
    \Big)
    \,\Lambda_{\alpha,\beta}
    \bigg|\nonumber\\
    &\q+
    \bigg|
    \sum_{\beta\in I_\Omega}
    \Big(
    \big(1-\epsilon(u_{\alpha})\big)\,R_{\Gamma,\beta}(\vec v)
    -\big(1-\epsilon(v_{\alpha})\big)\,R_{\Gamma,\beta}(\vec v)
    \Big)
    \,\Lambda_{\alpha,\beta}
    \bigg|\nonumber\\[1mm]
    &\leq
    \sigma\,
    L_{\epsilon,r}\,\|\vec u-\vec v\|_{\max}\,
    \theta_{\rm ext}^4\,\Lambda_{\alpha,\ph}
    +
    \big(1-\epsilon(u_{\alpha})\big)\,N_{\max}\,
    \big(\lambda_{2}({\zeta_{\alpha}})-\Lambda_{\alpha,\ph}\big)\nonumber\\[1mm]
    &\q+
    \sigma\,\max\{r^4,\theta_{\rm ext}^4\}\,
    L_{\epsilon,r}\,\|\vec u-\vec v\|_{\max}\,
    \big(\lambda_{2}({\zeta_{\alpha}})-\Lambda_{\alpha,\ph}\big)\nonumber\\[1mm]
    &\leq
    \big(1-\epsilon(u_{\alpha})\big)\,N_{\max}\,\lambda_{2}({\zeta_{\alpha}})
    +
    \sigma\,\max\{r^4,\theta_{\rm ext}^4\}\,
    L_{\epsilon,r}\,\|\vec u-\vec v\|_{\max}\,\lambda_{2}({\zeta_{\alpha}}).
    \label{eq:olafrlipestnegterm}
  \end{align}
  Next, \eqref{eq:olafrlip} and \eqref{eq:olafrlipestnegterm} imply
  {\allowdisplaybreaks
  \begin{eqnarray*}
    & & \hspace{-50pt}
    \sigma\,\big(4\,r^3+L_{\epsilon,r}\,r^4\big)\,
    \|\vec u-\vec v\|_{\max}\,\lambda_{2}({\zeta_{\alpha}})\\*[1mm]
    &\overset{\eqref{eq:olafrlip}}\geq& 
    \left|
      N_{\max}\,\lambda_{2}({\zeta_{\alpha}})
      -
      \bigg|
      \sum_{\beta\in I_\Omega}
      \Big(
      \big(1-\epsilon(u_{\alpha})\big)\,R_{\Gamma,\beta}(\vec u)
      -\big(1-\epsilon(v_{\alpha})\big)\,R_{\Gamma,\beta}(\vec v)
      \Big)
      \,\Lambda_{\alpha,\beta}
    \right.\\*
    & &\left.
      \vphantom{\sum_{\beta\in I_\Omega}}
      \hspace{83pt}
      +\sigma\,
      \big(\epsilon(v_{\alpha})-\epsilon(u_{\alpha})\big)\,
      \theta_{\rm ext}^4\,\Lambda_{\alpha,\ph}\,
      \bigg|
    \right|\\
    &\geq& 
    N_{\max}\,\lambda_{2}({\zeta_{\alpha}})
    -
    \bigg|
    \sum_{\beta\in I_\Omega}
    \Big(
    \big(1-\epsilon(u_{\alpha})\big)\,R_{\Gamma,\beta}(\vec u)
    -\big(1-\epsilon(v_{\alpha})\big)\,R_{\Gamma,\beta}(\vec v)
    \Big)
    \,\Lambda_{\alpha,\beta}\\*
    & &
    \vphantom{\sum_{\beta\in I_\Omega}}
    \hspace{83pt}
    +\sigma\,
    \big(\epsilon(v_{\alpha})-\epsilon(u_{\alpha})\big)\,
    \theta_{\rm ext}^4\,\Lambda_{\alpha,\ph}\,
    \bigg|
    \\
    &\overset{\eqref{eq:olafrlipestnegterm}}\geq& N_{\max}\,
    \Big(\lambda_{2}({\zeta_{\alpha}})
    -\big(1-\epsilon(u_{\alpha})\big)\,\lambda_{2}({\zeta_{\alpha}})
    \Big)\\[1mm]
    & &
    -
    \sigma\,\max\{r^4,\theta_{\rm ext}^4\}\,
    L_{\epsilon,r}\,\|\vec u-\vec v\|_{\max}\,\lambda_{2}({\zeta_{\alpha}})
    \\[1mm]
    &=&
    N_{\max}\,\epsilon(u_{\alpha})\,\lambda_{2}({\zeta_{\alpha}})
    -
    \sigma\,\max\{r^4,\theta_{\rm ext}^4\}\,
    L_{\epsilon,r}\,\|\vec u-\vec v\|_{\max}\,\lambda_{2}({\zeta_{\alpha}}),
  \end{eqnarray*}}%
  thereby proving \eqref{eq:nonnegradbrg}.

  To prove the claimed Lipschitz continuity \eqref{eq:nonnegradbvg} of
  $\vec V_{\Gamma,\alpha}$, $\alpha\in I_\Omega$, one uses \eqref{eq:nonlocsprintds}
  and estimates
  \begin{eqnarray*}
    & & \hspace{-60pt}
    \big|V_{\Gamma,\alpha}(\vec u)-V_{\Gamma,\alpha}(\vec v)\big|\\
    &\overset{\text{\eqref{eq:nonlocsprintds},\aref{item:em}}}\leq&
    \left|
      \sum_{\beta\in I_\Omega}
      \big(
      \epsilon(u_\alpha)R_{\Gamma,\beta}(\vec u)
      -
      \epsilon(u_\alpha)R_{\Gamma,\beta}(\vec v)
      \big)\,\Lambda_{\alpha,\beta}
    \right|\\
    & &
    +\left|
      \sum_{\beta\in I_\Omega}
      \big(
      \epsilon(u_\alpha)R_{\Gamma,\beta}(\vec v)
      -
      \epsilon(v_\alpha)R_{\Gamma,\beta}(\vec v)
      \big)\,\Lambda_{\alpha,\beta}
    \right|\\[2mm]
    & &
    +\sigma\,L_{\epsilon,r}\,\big|u_\alpha-v_\alpha\big|\,\theta_{\rm ext}^4\,\Lambda_{\alpha,\ph}\\
    &\hspace{-6pt}\overset{\text{\eqref{eq:nonnegradbrg}, \aref{item:em}}}\leq&
    \epsilon(u_\alpha)\,
    \sigma\,\epsilon_{\min,r}^{-1}\,\Big(4\,r^3
    +L_{\epsilon,r}\,\big(r^4+\max\{r^4,\theta_{\rm ext}^4\}\big)\Big)\\
    & &
    \hspace{35pt}\cdot\|\vec u-\vec v\|_{\max}\,
    \big(\lambda_2(\zeta_\alpha)-\Lambda_{\alpha,\ph}\big)\\
    & &
    +\sigma\,\max\{r^4,\theta_{\rm ext}^4\}\,
    L_{\epsilon,r}\,\|\vec u-\vec v\|_{\max}\,
    \big(\lambda_2(\zeta_\alpha)-\Lambda_{\alpha,\ph}\big)\\
    & &
    +\sigma\,L_{\epsilon,r}\,\|\vec u-\vec v\|_{\max}\,\theta_{\rm ext}^4\,\Lambda_{\alpha,\ph}\\
    &\leq&
    \sigma\,
    \max\{r^4,\theta_{\rm ext}^4\}\,
    L_{\epsilon,r}\,
    \lambda_2(\zeta_\alpha)\,
    \big(2\,\epsilon(u_\alpha)\,\epsilon_{\min,r}^{-1}+1\big)\,
    \|\vec u-\vec v\|_{\max}\\
    & &
    +
    4\,\epsilon(u_\alpha)\,\sigma\,\epsilon_{\min,r}^{-1}\,r^3\,
    \lambda_2(\zeta_\alpha)\,
    \|\vec u-\vec v\|_{\max},
  \end{eqnarray*}
  which establishes \eqref{eq:nonnegradbvg}.

  The assertions \eqref{eq:nonnegradbrs} 
  and \eqref{eq:nonnegradbvs} 
  on $\vec R_\Sigma$ and $V_{\Sigma,\alpha}$, respectively, 
  can be proved analogously to the proofs of 
  \eqref{eq:nonnegradbrg} 
  and \eqref{eq:nonnegradbvg} above.
\end{proof}

\subsection{Formulation of Scheme}\label{sec:scheme}

Recalling the meaning of $i(\alpha)$ from \daref{eq:zetaomega},
for each $\vec u=(u_i)_{i\in I}$, define
\begin{equation}
  \label{eq:defures}
  \vec u\res{I_\Omega}:=(u_{i(\alpha)})_{\alpha\in I_\Omega},\q\q
  \vec u\res{I_\Sigma}:=(u_{i(\alpha)})_{\alpha\in I_\Sigma}.
\end{equation}
The finite volume scheme is now stated in \eqref{subeq:heatopeq} and 
\eqref{subeq:heatconsolfvol2} below. 
One is seeking a nonnegative solution 
$(\vec u_0,\dots,\vec u_N)$, $\vec u_\nu=(u_{\nu,i})_{i\in I}$, to 
\begin{subequations}
  \label{subeq:heatopeq}
  \begin{align}
    \label{eq:heatopeqin}
    u_{0,i}&=\theta_{{\rm init},i}& &(i\in I),\\
    \mathcal H_{\nu,i}(\vec u_{\nu-1},\vec u_\nu)&=0& &(i\in I,\q\nu\in\{1,\dots,N\}),
    \label{eq:heatopeq}
  \end{align}
\end{subequations}
where, for each $\nu\in\{1,\dots,N\}$:
{\allowdisplaybreaks
\begin{subequations}
  \label{subeq:heatconsolfvol2}
  \begin{align}
    \mbox{}
    \mathcal H_{\nu,i}&:\,(\mathbb R^+_0)^I\times(\mathbb R^+_0)^I\longrightarrow\mathbb R,\nonumber\\
    \mathcal H_{\nu,i}(\tilde{\vec u},\vec u)
    &=k_\nu^{-1}\,\sum_{m\in\{{\rm s,g}\}}
    \big(\ep_m(u_i)-\ep_m(\tilde u_i)\big)\,
    \lambda_3(\omega_{m,i})\label{eq:heatconsolfvoleps2}\\*
    &\q-\sum_{m\in\{{\rm s,g}\}}
    \kappa_m\,\sum_{j\in\nb_m(i)}
    \frac{u_j-u_i}{\|x_i-x_j\|_2}\,
    \lambda_{2}\big(\partial\omega_{m,i}\cap\partial\omega_{m,j}\big)
    \label{eq:heatconsolfvolkint2}\\*
    &\q+\sigma\,\epsilon(u_i)\,u_i^4\,
    \lambda_{2}\big(\partial\omega_{{\rm s},i}\cap\Gamma_{\Omega}\big)
    -\sum_{\alpha\in J_{\Omega,i}}V_{\Gamma,\alpha}(\vec u\res{I_\Omega})
    \label{eq:heatconsolfvolkgamma2}\\*
    &\q\vphantom{\sum_{\alpha\in I_i}}
    +\sigma\,\epsilon(u_i)\,
    (u_i^4-\,\theta_{\rm ext}^4)\,
    \lambda_{2}\big(\partial\omega_{{\rm s},i}\cap(\partial\Omega\setminus\Gamma_{\Omega})\big)
    \label{eq:heatconsolfvolknogamma2}\\*
    &\q+\sigma\,\epsilon(u_i)\,u_i^4\,
    \lambda_{2}\big(\omega_i\cap\Sigma\big)
    -\sum_{\alpha\in J_{\Sigma,i}}V_{\Sigma,\alpha}(\vec u\res{I_\Sigma})
    \label{eq:heatconsolfvolkif2}\\*
    &\q-\sum_{m\in\{{\rm s,g}\}}f_{m,\nu,i}\,
    \lambda_3(\omega_{m,i}),
    \label{eq:heatconsolfvolrhs2}
  \end{align}
\end{subequations}}%
where 
\begin{align}
  \label{eq:approxf}
  f_{m,\nu,i}
  \approx
  \frac{\int_{t_{\nu-1}}^{t_\nu}\int_{\omega_{m,i}}f_m}
  {k_\nu\,\lambda_3(\omega_{m,i})}
\end{align}
is a suitable approximation of the source term on $]t_{\nu-1},t_\nu[\times\omega_{m,i}$,
and $\theta_{{\rm init},i}$ is a suitable approximation of
$\theta_{\rm init}$ on $\omega_i$, $i\in I$. 
In general, the choices will depend
on the regularity of $f_m$ and $\theta_{\rm init}$ (for $f_m$ continuous, one might choose 
$f_{m,\nu,i}:=f_m(t_\nu,x_i)$, but 
$f_{m,\nu,i}:=(k_\nu\,\lambda_3(\omega_{m,i}))^{-1}\int_{t_{\nu-1}}^{t_\nu}\int_{\omega_{m,i}}f_m$
for a general $f_m\in L^\infty\big(]0,T[\times\Omega_m\big)$).
However, suitable approximations are assumed to satisfy:
\begin{enumerate}[({AA}-1)]
\item\label{item:fapprox} 
  For each $m\in\{{\rm s,g}\}$, $\nu\in\{0,\dots,N\}$, and $i\in I$:
  \begin{equation*}
    0
    \leq
    \essinf(f_m\res{]t_{\nu-1},\,t_\nu[\times\omega_{m,i}})
    \leq
    f_{m,\nu,i}
    \leq \|f_m\|_{L^\infty(]t_{\nu-1},\,t_\nu[\times\omega_{m,i})},
  \end{equation*}
  where $\essinf$ denotes the essential infimum.
\item\label{item:initapprox} 
  For each $i\in I$:
  \begin{equation*}
    0\leq\essinf(\theta_{\rm init}\res{\omega_i})\leq\theta_{{\rm init},i}
    \leq\|\theta_{\rm init}\|_{L^\infty(\omega_i)}.
  \end{equation*}
  \setcounter{philipenumiaa}{\value{enumi}}
\end{enumerate}

\begin{remark}
  As in \cite[Sec.\ 3.6]{KP-05}, one can consider the case where
  $\Omega_{\rm s}$ and $\Omega_{\rm g}$ are
  axisymmetric, and, in cylindrical coordinates $(r,\vartheta,z)$,
  the functions $\theta$,
  $f_{\rm s}$ and $f_{\rm g}$ are independent of the angular coordinate
  $\vartheta$, using the circular projection $(r,\vartheta,z)\mapsto (r,z)$ to
  reduce the model of Sec.\ \ref{sec:heat} as well as the finite
  volume scheme to two space dimensions. 
  As the arguments of \cite[Sec.\ 3.6]{KP-05} are still valid in the
  present fully implicit case,
  analogous
  reasoning to the contents of the following Sec.\ \ref{sec:sol} can
  be applied to the fully implicit axisymmetric finite volume scheme to prove
  a maximum principle as well as existence and uniqueness for the discrete
  solution, analogous
  to Th.\ \ref{th:sol}, Cor.\ \ref{cor:soln}, 
  and Cor.\ \ref{cor:solbigTnew} below.
\end{remark}

\section[Discrete Existence and Uniqueness]{Existence and Uniqueness of a Discrete Solution to the
  Finite Volume Scheme, Maximum Principle}\label{sec:sol}

As the proof of existence and uniqueness of a discrete
solution to the finite volume scheme in \cite{KP-05}, the proof of
existence and uniqueness of a discrete solution to the fully implicit
finite volume scheme \eqref{subeq:heatopeq} in Th.\ \ref{th:sol}
and Cor.\ \ref{cor:solbigTnew} below is based on the root problem 
with maximum principle \cite[Th.\ 4.1]{KP-05}. For the convenience of
the reader, \cite[Th.\ 4.1]{KP-05} is now reproduced as Th.\ \ref{th:opzeropar}:
\begin{theorem}
  \label{th:opzeropar}
  Let $\tau\subseteq\mathbb R$ be a (closed, open, half-open, bounded
  or unbounded) interval.
  Given a finite, nonempty index set $I$, 
  consider a continuous operator 
  \begin{equation}
    \label{eq:rigop}
    \mathcal H:\,\tau^I\longrightarrow\mathbb R^I,\q 
    \mathcal H(\vec u)=\big(\mathcal H_i(\vec u)\big)_{i\in I}.
  \end{equation}
  Assume there are continuous functions 
  $b_i\in C(\tau,\mathbb R)$, 
  $\tilde h_i\in C(\tau,\mathbb R)$, 
  $\tilde g_i\in C(\tau^I,\mathbb R)$, $i\in I$, 
  such that the following
  conditions {\rm \eqref{item:hhgpar} -- \eqref{item:biinvlip}} are satisfied. 
  \begin{enumerate}[\em(i)]
  \item\label{item:hhgpar} 
    There is $\tilde{\vec u}\in\tau^I$ such that,
    for each $i\in I$, $\vec u\in\tau^I$:
    \begin{equation*}
      \mathcal H_i(\vec u)
      =b_i(u_i)+\tilde h_i(u_i)-b_i({\tilde u}_i)-\tilde g_i(\vec u).
    \end{equation*}
  \item\label{item:bfmtilde} There are $\tilde m,\tilde M\in\tau$,
    a family of nonpositive numbers
    $(\beta_i)_{i\in I}\in(\mathbb R^-_0)^I$, 
    and a family of nonnegative numbers
    $(B_i)_{i\in I}\in(\mathbb R^+_0)^I$ such that, for each $i\in I$, 
    $\vec u\in\tau^I$, $\theta\in\tau$:
    \begin{subequations}
      \label{subeq:bfmtilde}
      \begin{align}
      \label{eq:bfmtildemax}
        &\max\big\{\vecmax{\vec u},\,\tilde M\big\}\leq\theta& 
        &\Rightarrow&
        &\tilde g_i(\vec u)-\tilde h_i(\theta)\leq B_i,\\
        &\theta\leq\min\big\{\tilde m,\,\vecmin{\vec u}\big\}& 
        &\Rightarrow&
        &\tilde g_i(\vec u)-\tilde h_i(\theta)\geq \beta_i,
      \label{eq:bfmtildemin}
      \end{align}
    \end{subequations}
    where $\vecmax{\vec u}$ and $\vecmin{\vec u}$ are according to
    \eqref{eq:minandmax}. 
  \item\label{item:biinvlip} 
    There is a family of positive numbers 
    $(C_{b,i})_{i\in I}\in(\mathbb R^+)^I$ such that,
    for each $i\in I$ and $\theta_1,\theta_2\in\tau$:\q\;
    $\theta_2\geq\theta_1\q\Rightarrow\q
    b_i(\theta_2)\geq (\theta_2-\theta_1)\,C_{b,i}+b_i(\theta_1)$.
  \setcounter{philipenumi}{\value{enumi}}
  \end{enumerate}
  Letting
  \begin{align}
    \label{eq:maxbidef}
    \beta&:=\min\left\{\frac{\beta_i}{C_{b,i}}:\,i\in I\right\},&
    B&:=\max\left\{\frac{B_i}{C_{b,i}}:\,i\in I\right\},\\[1mm]
    m(\tilde{\vec u})&:=\min\big\{\tilde m,\,\vecmin{\tilde{\vec u}}+\beta\big\},&
    M(\tilde{\vec u})&:=\max\big\{\tilde M,\,\vecmax{\tilde{\vec u}}+B\big\},
    \label{eq:mtiludef}
  \end{align}
  one has the following maximum principle: If $\vec u_0\in\tau^I$
  satisfies $\mathcal H(\vec u_0)=\vec 0:=(0,\dots,0)$,
  then $\vec u_0\in[m(\tilde{\vec u}),M(\tilde{\vec u})]^I$.

  If, in addition to {\rm \eqref{item:hhgpar} -- \eqref{item:biinvlip}},
  the following conditions {\rm \eqref{item:tildegilip} -- \eqref{item:lglesslhpar}}
  are satisfied, then there is a unique $\vec u_0\in[m(\tilde{\vec u}),M(\tilde{\vec u})]^I$ such that
  $\mathcal H(\vec u_0)=\vec 0$. 
  \begin{enumerate}[\em(i)]
  \setcounter{enumi}{\thephilipenumi}
  \item\label{item:tildegilip}
    For each $i\in I$, there is 
    $L_{g,i}(\tilde{\vec u})\in\mathbb R^+_0$ such that 
    $\tilde g_i$ is $L_{g,i}(\tilde{\vec u})$-Lipschitz with respect to the
    max-norm on $[m(\tilde{\vec u}),M(\tilde{\vec u})]^I$.
  \item\label{item:tildehinvlip}
    For each $i\in I$, there is 
    $C_{\tilde h,i}(\tilde{\vec u})\in\mathbb R^+_0$ such that, 
    for each $\theta_1,\theta_2\in[m(\tilde{\vec u}),M(\tilde{\vec u})]$:\q\;
    $\theta_2\geq\theta_1\q\Rightarrow\q
    \tilde h_i(\theta_2)\geq (\theta_2-\theta_1)\,C_{\tilde h,i}(\tilde{\vec u})
    +\tilde h_i(\theta_1)$.
  \item\label{item:lglesslhpar} 
    $L_{g,i}(\tilde{\vec u})<C_{b,i}+C_{\tilde h,i}(\tilde{\vec u})$ 
    for each $i\in I$.
  \end{enumerate}
\end{theorem}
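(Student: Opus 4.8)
The plan is to treat the two assertions separately, in both cases exploiting the additive scalar structure $\mathcal H_i(\vec u)=b_i(u_i)+\tilde h_i(u_i)-b_i(\tilde u_i)-\tilde g_i(\vec u)$ furnished by (i).

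\emph{Maximum principle.} Suppose $\mathcal H(\vec u_0)=\vec 0$, and let me establish the upper bound $\vecmax{\vec u_0}\le M(\tilde{\vec u})$ first. I choose $i\in I$ with $u_{0,i}=\vecmax{\vec u_0}$; then (i) together with $\mathcal H_i(\vec u_0)=0$ gives
\[
  b_i(u_{0,i})-b_i(\tilde u_i)=\tilde g_i(\vec u_0)-\tilde h_i(u_{0,i}).
\]
Arguing by contradiction, I assume $u_{0,i}>M(\tilde{\vec u})$, so in particular $u_{0,i}>\tilde M$ and $u_{0,i}>\vecmax{\tilde{\vec u}}+B\ge\tilde u_i$. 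The first inequality forces $\max\{\vecmax{\vec u_0},\tilde M\}=u_{0,i}$, so \eqref{eq:bfmtildemax} with $\theta=u_{0,i}$ bounds the right-hand side above by $B_i$, while the linear lower bound (iii) applied to the left-hand side (legitimate since $u_{0,i}\ge\tilde u_i$) yields $(u_{0,i}-\tilde u_i)\,C_{b,i}\le B_i$. Hence $u_{0,i}-\tilde u_i\le B_i/C_{b,i}\le B$, contradicting $u_{0,i}>\vecmax{\tilde{\vec u}}+B$. The lower bound $\vecmin{\vec u_0}\ge m(\tilde{\vec u})$ is entirely symmetric, using the minimizing index, \eqref{eq:bfmtildemin}, and the nonpositivity of the $\beta_i$.

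\emph{Existence and uniqueness.} Here I recast the root problem as a fixed-point problem. Put $\phi_i:=b_i+\tilde h_i$ on the box $J:=[m(\tilde{\vec u}),M(\tilde{\vec u})]$. Adding the one-sided linear growth estimates (iii) and (v) gives
\[
  \phi_i(\theta_2)-\phi_i(\theta_1)\ge\big(C_{b,i}+C_{\tilde h,i}(\tilde{\vec u})\big)\,(\theta_2-\theta_1)
  \q(\theta_2\ge\theta_1,\;\theta_1,\theta_2\in J),
\]
so each $\phi_i$ is a strictly increasing homeomorphism of $J$ onto $[\phi_i(m(\tilde{\vec u})),\phi_i(M(\tilde{\vec u}))]$ whose inverse is $\big(C_{b,i}+C_{\tilde h,i}(\tilde{\vec u})\big)^{-1}$-Lipschitz. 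By (i), a point $\vec u\in J^I$ solves $\mathcal H(\vec u)=\vec 0$ if and only if $\phi_i(u_i)=b_i(\tilde u_i)+\tilde g_i(\vec u)$ for all $i$, that is, exactly when $\vec u$ is a fixed point of $T_i(\vec u):=\phi_i^{-1}\big(b_i(\tilde u_i)+\tilde g_i(\vec u)\big)$, provided this map is well defined.

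\emph{The main step} is therefore the invariance $T(J^I)\subseteq J^I$, equivalently $\phi_i(m(\tilde{\vec u}))\le b_i(\tilde u_i)+\tilde g_i(\vec u)\le\phi_i(M(\tilde{\vec u}))$ for all $\vec u\in J^I$. For the upper estimate, the bounds $\vecmax{\vec u}\le M(\tilde{\vec u})$ and $M(\tilde{\vec u})\ge\tilde M$ permit applying \eqref{eq:bfmtildemax} with $\theta=M(\tilde{\vec u})$, giving $\tilde g_i(\vec u)-\tilde h_i(M(\tilde{\vec u}))\le B_i$; since $M(\tilde{\vec u})-\tilde u_i\ge B\ge B_i/C_{b,i}$, the growth bound (iii) yields $b_i(M(\tilde{\vec u}))-b_i(\tilde u_i)\ge B_i$, and combining the two inequalities gives the claim. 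The lower estimate is symmetric, resting on \eqref{eq:bfmtildemin} and $m(\tilde{\vec u})-\tilde u_i\le\beta\le\beta_i/C_{b,i}\le0$. With invariance in hand, the Lipschitz inverse and the Lipschitz hypothesis (iv) on $\tilde g_i$ show, for $\vec u,\vec v\in J^I$,
\[
  |T_i(\vec u)-T_i(\vec v)|\le\frac{L_{g,i}(\tilde{\vec u})}{C_{b,i}+C_{\tilde h,i}(\tilde{\vec u})}\,\|\vec u-\vec v\|_{\max},
\]
so by (vi) the map $T$ is a contraction on $J^I$ in the max-norm; since $J^I$ is complete and $T$ maps it into itself, Banach's fixed-point theorem produces the unique fixed point, i.e.\ the unique root of $\mathcal H$ in $J^I$. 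I expect the invariance step to be the only genuine obstacle: it is precisely there that the definitions of $\beta$, $B$, $m(\tilde{\vec u})$, $M(\tilde{\vec u})$ must be reconciled with the one-sided bounds (ii) and the linear growth (iii), whereas the contraction estimate and the maximum principle are then routine. (Uniqueness by itself can also be read off directly, by subtracting the two root equations and invoking the same growth and Lipschitz bounds, thereby bypassing Banach's theorem.)
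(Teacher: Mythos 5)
Your proof is correct. Note that the paper itself does not prove Theorem \ref{th:opzeropar} at all --- it is quoted verbatim from \cite{KP-05} and the proof is deferred to [KP-05, Th.\ 4.1] --- so there is no in-paper argument to compare against; your route (maximum principle by contradiction at an extremal index using \eqref{eq:bfmtildemax}/\eqref{eq:bfmtildemin} together with the one-sided growth of $b_i$, then existence and uniqueness via Banach's fixed-point theorem for $T_i=\phi_i^{-1}\circ(b_i(\tilde u_i)+\tilde g_i)$ with $\phi_i=b_i+\tilde h_i$, the strict inequality \eqref{item:lglesslhpar} supplying the contraction constant) is exactly the natural one that the hypotheses are tailored to, and every step checks out, including the invariance estimate $\phi_i(m(\tilde{\vec u}))\le b_i(\tilde u_i)+\tilde g_i(\vec u)\le\phi_i(M(\tilde{\vec u}))$. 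The only point worth flagging is that your argument (like the theorem statement itself, via hypotheses \eqref{item:tildegilip} and \eqref{item:tildehinvlip}) tacitly uses $[m(\tilde{\vec u}),M(\tilde{\vec u})]\subseteq\tau$, e.g.\ when applying \eqref{eq:bfmtildemax} with $\theta=M(\tilde{\vec u})$; this holds in the paper's application ($\tau=\mathbb R^+_0$, $\beta_i=0$) but is an implicit standing assumption rather than something you need to repair.
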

\begin{proof}
  See \cite[Th.\ 4.1]{KP-05}.
\end{proof}

As in \cite{KP-05}, the essential step in proving the
discrete existence and uniqueness results is to first provide a discrete 
existence result with maximum principle, locally in time.
This is accomplished by the following Th.\ \ref{th:sol} which
corresponds to \cite[Th.\ 4.2]{KP-05}. Given an 
arbitrary vector $\tilde{\vec u}\in(\mathbb R^+_0)^I$,
Th.\ \ref{th:sol} establishes that each root of the finite volume scheme operator
$\mathcal H_\nu(\tilde{\vec u},\cdot)$ of \eqref{subeq:heatconsolfvol2}
satisfies a maximum principle. Moreover, 
Th.\ \ref{th:sol} proves the existence of a unique root to
$\mathcal H_\nu(\tilde{\vec u},\cdot)$,
provided that the $\nu$-th time step $k_\nu$ is sufficiently small. 

As in \cite[Th.\ 4.2]{KP-05}, 
the upper and lower bound for the solution, respectively, given by \eqref{eq:def-mnu} and 
\eqref{eq:def-Mnu} below, are determined by the external temperature
$\theta_{\rm ext}$, by the max and min of $\tilde{\vec u}$
as defined in \eqref{eq:minandmax}, 
by the size of the time step, and by the values of 
the heat sources in the time interval $[t_{\nu-1},t_\nu]$. 

The condition on the time step size \eqref{eq:cond-knu} arises from
the radiation terms in \eqref{subeq:heatconsolfvol2}, namely,
\eqref{eq:heatconsolfvolkgamma2} -- \eqref{eq:heatconsolfvolkif2}. 
It depends on the constants
$L_{\vec V,\Gamma_{\Omega}}$, 
$L_{\vec V,\partial\Omega\setminus\Gamma_{\Omega}}$, 
and $L_{\vec V,\Sigma}$ 
defined in \eqref{eq:def-LV} -- \eqref{eq:def-LVsigma} below, involving the ratios
between the size of boundary elements and adjacent volume
elements. Thus, these constants are of order $h^{-1}$ if $h$ is a
parameter for the fineness of a space discretization constructed by
uniform refinement of some initial grid, such that 
\eqref{eq:cond-knu} is of the form $k\sim h$ in the notation of 
the Introduction.

Letting $\tilde{\vec u}=\vec u_{\nu-1}$, 
as a direct consequence of Th.\ \ref{th:sol}, for $k_\nu$ small enough,
each nonnegative solution $(\vec u_0, \dots, \vec u_{\nu-1})$ 
to the finite volume scheme \eqref{subeq:heatopeq}
with $N$ replaced by $\nu-1 < N$, can be uniquely extended to $t=t_\nu$
(see Cor.\ \ref{cor:soln}). 

Finally, as in \cite[Th.\ 4.3]{KP-05}, an inductive argument extends
the local result of Th.\ \ref{th:sol} to guarantee a unique solution to the entire
finite volume scheme \eqref{subeq:heatopeq} (see Cor.\ \ref{cor:solbigTnew}).

In preparation for Th.\ \ref{th:sol}, notions of the {\em variation} 
of a function are recalled as well as some elementary properties: For a function
$f:\,[a,\infty[\longrightarrow\mathbb R$, let 
\begin{equation}
  \label{eq:variation}
  \begin{gathered}
    \var^+ f:\,[a,\infty[\longrightarrow[0,\infty],\\
    \var^+ f(a):=0,\\
      \begin{aligned}
        \var^+ f(\lambda):=\sup&\left\{
          \sum_{\nu=1}^N\max\big\{0,f(t_\nu)-f(t_{\nu-1})\big\}:\right.\\
        &\left.\hspace{15pt}
          \vphantom{\underset{\nu\in\{1,\dots,N\}}\sum}
          (t_\nu)_{\nu\in\{0,\dots,N\}}\text{ is a discretization of $[a,\lambda]$}
        \right\}\;
        \text{for $\lambda>a$},
      \end{aligned}
  \end{gathered}
\end{equation}
denote the {\em positive variation} of $f$, and define its 
{\em negative variation} $\var^- f$ by replacing ``$\max$'' in 
\eqref{eq:variation} with ``$-\min$''. 
Then, $\var^+ f$ and $\var^- f$ are nonnegative and
increasing. Moreover, if $f$ is $L$-Lipschitz on $[a,r]$, then
$\var^+ f$ and $\var^- f$ are $L$-Lipschitz on $[a,r]$, and,
for each $\lambda\in[a,r]$, 
$f(\lambda)=f(a) + \var^+ f(\lambda) - \var^- f(\lambda)$.

\begin{theorem}
  \label{th:sol}
  Assume \emaref{item:aomega} -- \emaref{item:initial}, 
  \emdaref{item:spdis} -- \emdaref{eq:zetaomega}, 
  \emaaref{item:fapprox} and \emaaref{item:initapprox}.
  Moreover, assume $\nu\in\{1,\dots,N\}$ and 
  $\tilde{\vec u} =({\tilde u}_i)_{i\in I}
  \in  \left( \mathbb R^+_0\right)^I $.
  Let 
  \begin{subequations}
    \label{subeq:thsolconstdef}
    \begin{align}
      B_{f,\nu }  & :=  \max \left\{
        \sum_{m\in\{{\rm s,g}\}} f_{m,\nu,i}\, 
        \frac{\lambda_3(\omega_{m,i})}{\lambda_3(\omega_{i})} : \,
        i \in I \right\},
      \label{eq:def-Bf-nu}
      \displaybreak[2]
      \\ 
      L_{\vec V,\Gamma_{\Omega}}
      &:=\sigma\,\max\left\{
        \frac{ \lambda_2(\partial\omega_{{\rm s},i}\cap\Gamma_{\Omega})}{\lambda_3(\omega_{i})}:\,i\in I\right\},
      \label{eq:def-LV}
      \displaybreak[2]
      \\ 
      L_{\vec V,\partial\Omega\setminus\Gamma_{\Omega}}
      &:=\sigma\,\max\left\{
        \frac{ \lambda_2(\partial\omega_{{\rm s},i}
      \cap(\partial\Omega\setminus\Gamma_{\Omega}))}{\lambda_3(\omega_{i})}:\,i\in I\right\},
      \label{eq:def-LVomega}
      \displaybreak[2]
      \\ 
      L_{\vec V,\Sigma}
      &:=\sigma\,\max\left\{
        \frac{ \lambda_2(\omega_i \cap \Sigma)}{\lambda_3(\omega_{i})}:\,i\in I\right\},
      \label{eq:def-LVsigma}
      \displaybreak[2]
      \\[1mm]
      m(\tilde{\vec u}) &:= \min\big\{\,\theta_{\rm ext},\,  
      \vecmin{\tilde{\vec u}} \big\} ,
      \label{eq:def-mnu}
      \\[1mm]
      M_\nu(\tilde{\vec u}) &:= \max \left\{  \theta_{\rm ext},\, 
        \vecmax{\tilde{\vec u}} + \frac{k_\nu}{C_\ep} B_{f,\nu}
      \right\},
      \label{eq:def-Mnu}
    \end{align}
  \end{subequations}
  with $\vecmin{\tilde{\vec u}}$, 
  $\vecmax{\tilde{\vec u}}$ according to \eqref{eq:minandmax}, 
  and $C_{\ep}$ according to \emaref{item:ae}.
  
  Then, one has the maximum principle that each solution
  $\vec u_{\nu} =(u_{\nu,i})_{i\in I} \in 
  \left( \mathbb R^+_0\right)^I $ to
  \begin{equation}
    \mathcal H_{\nu,i}(\tilde{\vec u},\vec u_\nu)=0 \q\q (i\in I)
    \label{eq:heatopeqnu}
  \end{equation}
  must lie in 
  $[m(\tilde{\vec u}),M_\nu(\tilde{\vec u})]^I$. Furthermore, if
  \begin{subequations}
    \label{subeq:lgdef}
    {\allowdisplaybreaks
    \begin{align}
      \label{eq:def-lgone}
      l_g^1 &:\,\mathbb R^+_0\longrightarrow\mathbb R^+_0,\q
      l_g^1(r) := 4\,\var^-\epsilon(r)\,r^3 + L_{\epsilon,r}\,r^4,
      \\[2mm]
      \label{eq:def-lgtwo}
      l_g^2 &:\,\mathbb R^+_0\longrightarrow\mathbb R^+_0,\q
      l_g^2(r) := 4\,\epsilon_{\min,r}^{-1}\,r^3 + \max\{r^4,\theta_{\rm ext}^4\}\,L_{\epsilon,r}\,\big(2\,\epsilon_{\min,r}^{-1}+1\big),
      \\[2mm]
      \label{eq:def-lgthree}
      l_g^3 &:\,\mathbb R^+_0\longrightarrow\mathbb R^+_0,\q
      l_g^3(r) := L_{\epsilon,r}\,\theta_{\rm ext}^4,    
    \end{align}}%
  \end{subequations}
  where $L_{\epsilon,r}$ and $\epsilon_{\min,r}$ are according to
  \aref{item:em} and Rem.\ {\rm\ref{rem:epsminr}}, respectively,
  and, if $k_\nu$ is such that 
  \begin{align}
    \mbox{}&
    k_\nu\,\Big((L_{\vec V,\Gamma_{\Omega}}+L_{\vec V,\partial\Omega\setminus\Gamma_{\Omega}}+L_{\vec V,\Sigma})
    \,l_g^1\big(M_\nu(\tilde{\vec u})\big)
    +
    (L_{\vec V,\Gamma_{\Omega}} + L_{\vec V,\Sigma})
    \,l_g^2\big(M_\nu(\tilde{\vec u})\big)
    \nonumber\\
    &\q\;
    +L_{\vec V,\partial\Omega\setminus\Gamma_{\Omega}}\,
    l_g^3\big(M_\nu(\tilde{\vec u})\big)\Big)
    \;<\; C_\ep,
    \label{eq:cond-knu}
  \end{align}
  then there is a unique 
  $\vec u_{\nu}\in [m(\tilde{\vec u}),M_\nu(\tilde{\vec u})]^I$
  satisfying \eqref{eq:heatopeqnu}. 
\end{theorem}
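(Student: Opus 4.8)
The plan is to apply the abstract root problem Th.~\ref{th:opzeropar} to the operator $\mathcal H_\nu(\tilde{\vec u},\cdot)$ on the interval $\tau=\mathbb R^+_0$ (continuity being inherited from that of $\ep_m$, $\epsilon$ and, via Lem.~\ref{lem:nonnegrad}, of the $V$'s), with the given $\tilde{\vec u}$ serving as the $\tilde{\vec u}$ of \eqref{item:hhgpar}. Writing $d_i:=\sum_m\kappa_m\sum_{j\in\nb_m(i)}\|x_i-x_j\|_2^{-1}\lambda_2(\partial\omega_{m,i}\cap\partial\omega_{m,j})$ and $S_i:=\lambda_2(\partial\omega_{{\rm s},i}\cap\Gamma_\Omega)+\lambda_2(\partial\omega_{{\rm s},i}\cap(\partial\Omega\setminus\Gamma_\Omega))+\lambda_2(\omega_i\cap\Sigma)$, I would set $b_i(\theta):=k_\nu^{-1}\sum_m\ep_m(\theta)\lambda_3(\omega_{m,i})$, so that \eqref{eq:heatconsolfvoleps2} is exactly $b_i(u_i)-b_i(\tilde u_i)$ and \aref{item:ae} yields \eqref{item:biinvlip} with $C_{b,i}=k_\nu^{-1}C_\ep\lambda_3(\omega_i)$. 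The decisive choice is $\tilde h_i$: rather than the full $\epsilon$-weighted emission, I put only its increasing envelope into $\tilde h_i$, i.e.\ $\tilde h_i(\theta):=\theta\,d_i+\sigma\big(\epsilon(0)+\var^+\epsilon(\theta)\big)\theta^4 S_i$, which is increasing since it is assembled from products of nonnegative increasing functions. Then \eqref{item:hhgpar} forces $\tilde g_i$ to collect the off-diagonal fluxes $\sum_m\kappa_m\sum_{j\in\nb_m(i)}\|x_i-x_j\|_2^{-1}u_j\,\lambda_2(\cdots)$, the absorption $\sum_{\alpha\in J_{\Omega,i}}V_{\Gamma,\alpha}(\vec u\res{I_\Omega})+\sum_{\alpha\in J_{\Sigma,i}}V_{\Sigma,\alpha}(\vec u\res{I_\Sigma})$, the external term $\sigma\epsilon(u_i)\theta_{\rm ext}^4\lambda_2(\partial\omega_{{\rm s},i}\cap(\partial\Omega\setminus\Gamma_\Omega))$, the source $\sum_m f_{m,\nu,i}\lambda_3(\omega_{m,i})$, and — since $\epsilon=\epsilon(0)+\var^+\epsilon-\var^-\epsilon$ — the emission remainder $\sigma\var^-\epsilon(u_i)u_i^4 S_i$.

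For the maximum principle I would take $\tilde m=\tilde M=\theta_{\rm ext}$, $\beta_i=0$, $B_i:=\sum_m f_{m,\nu,i}\lambda_3(\omega_{m,i})$; with the above $C_{b,i}$ one checks $\beta=0$ and $B=\tfrac{k_\nu}{C_\ep}B_{f,\nu}$, so \eqref{eq:maxbidef}--\eqref{eq:mtiludef} reproduce exactly $m(\tilde{\vec u})$ and $M_\nu(\tilde{\vec u})$ of \eqref{eq:def-mnu}--\eqref{eq:def-Mnu}. To verify \eqref{eq:bfmtildemax}, fix $\theta\ge\max\{\vecmax{\vec u},\theta_{\rm ext}\}$; the conduction part of $\tilde g_i(\vec u)-\tilde h_i(\theta)$ telescopes into $\sum_m\kappa_m\sum_{j\in\nb_m(i)}\|x_i-x_j\|_2^{-1}(u_j-\theta)\lambda_2(\cdots)\le0$, while Lem.~\ref{lem:nonnegrad}\eqref{item:nonnegradzero} and the conservation \eqref{eq:conserforlambdazeta} bound the absorption and external terms by $\sigma\epsilon(u_i)\theta^4 S_i$. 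Substituting $\epsilon(0)+\var^+\epsilon(u_i)=\epsilon(u_i)+\var^-\epsilon(u_i)$ and using $\var^+\epsilon(u_i)\le\var^+\epsilon(\theta)$ collapses the radiative part to $\sigma S_i\var^-\epsilon(u_i)(u_i^4-\theta^4)\le0$; the dual estimate establishes \eqref{eq:bfmtildemin}. Thus the maximum principle part of Th.~\ref{th:opzeropar} places every solution of \eqref{eq:heatopeqnu} in $[m(\tilde{\vec u}),M_\nu(\tilde{\vec u})]^I$.

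For existence and uniqueness I would verify \eqref{item:tildegilip}--\eqref{item:lglesslhpar} on $[m(\tilde{\vec u}),M_\nu(\tilde{\vec u})]^I$ with $r:=M_\nu(\tilde{\vec u})$. Condition \eqref{item:tildehinvlip} holds with $C_{\tilde h,i}=d_i$, the $\theta^4$-part of $\tilde h_i$ being increasing. For \eqref{item:tildegilip} I would estimate $\tilde g_i$ termwise in the max-norm: the off-diagonal conduction is linear with constant $d_i$; the absorption sums are controlled by \eqref{eq:nonnegradbvg} and \eqref{eq:nonnegradbvs}, which after $\epsilon(u_\alpha)\le1$ and $\sum_\alpha\lambda_2(\zeta_\alpha)=\lambda_2(\partial\omega_{{\rm s},i}\cap\Gamma_\Omega)$ (resp.\ $=\lambda_2(\omega_i\cap\Sigma)$) contribute $\sigma\lambda_2(\cdots)\,l_g^2(r)$; the external term contributes $\sigma\lambda_2(\cdots)\,l_g^3(r)$; and the remainder $\sigma\var^-\epsilon(\theta)\theta^4 S_i$ contributes $\sigma S_i\,l_g^1(r)$, because $\var^-\epsilon$ is $L_{\epsilon,r}$-Lipschitz with $\var^-\epsilon\le\var^-\epsilon(r)$ on $[0,r]$. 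Dividing by $\lambda_3(\omega_i)$ and invoking \eqref{eq:def-LV}--\eqref{eq:def-LVsigma}, the conduction constant $d_i$ is cancelled by $C_{\tilde h,i}=d_i$, so \eqref{item:lglesslhpar} reduces to $L_{g,i}-d_i<C_{b,i}$, i.e.\ $\lambda_3(\omega_i)\,\mathcal L\le C_{b,i}=k_\nu^{-1}C_\ep\lambda_3(\omega_i)$ fails to bind precisely when $k_\nu\,\mathcal L<C_\ep$, $\mathcal L$ being the bracket in \eqref{eq:cond-knu}; this is exactly \eqref{eq:cond-knu}, and Th.~\ref{th:opzeropar} then delivers the unique root in $[m(\tilde{\vec u}),M_\nu(\tilde{\vec u})]^I$.

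I expect the main obstacle to be the Lipschitz estimate \eqref{item:tildegilip}. Unlike the explicit scheme of \cite{KP-05}, the implicit emissivities make $V_{\Gamma,\alpha}$ and $V_{\Sigma,\alpha}$ genuinely nonlinear in $\vec u$, so one cannot proceed without the new bounds of Lem.~\ref{lem:nonnegrad}\eqref{item:nonnegradb}; moreover the appearance of $\var^-\epsilon$ rather than a crude factor $4r^3$ in $l_g^1$ — which keeps \eqref{eq:cond-knu} of order $k\sim h$ and makes it vacuous for monotone $\epsilon$ — hinges on splitting the emission through $\epsilon=\epsilon(0)+\var^+\epsilon-\var^-\epsilon$ and placing only the increasing part in $\tilde h_i$. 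This same splitting is exactly what makes the sign computation in the maximum-principle step close, so the whole argument stands or falls with choosing $\tilde h_i$ as the increasing envelope of the emission.
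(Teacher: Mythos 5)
Your proposal is correct and follows essentially the same route as the paper: the same abstract root theorem, the same $b_i$, the same key splitting $\epsilon=\epsilon(0)+\var^+\epsilon-\var^-\epsilon$ with the increasing envelope placed in $\tilde h_i$, and the same Lipschitz constants $l_g^1,l_g^2,l_g^3$ leading to \eqref{eq:cond-knu}. The only (immaterial) deviations are that the paper also splits the outer-boundary term $\sigma\epsilon(u_i)\theta_{\rm ext}^4\lambda_2(\partial\omega_{{\rm s},i}\cap(\partial\Omega\setminus\Gamma_\Omega))$ between $\tilde h_i$ and $\tilde g_i$ via $\var^\pm\epsilon$ (you keep it whole in $\tilde g_i$, yielding the same constant $l_g^3$), and that the paper uses the slightly larger $C_{\tilde h,i}=L_{\kappa,i}+4\,m(\tilde{\vec u})^3\epsilon(0)\,C_{\vec V,i}$ where your $C_{\tilde h,i}=L_{\kappa,i}$ already suffices under \eqref{eq:cond-knu}.
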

\begin{proof}
  First note that, by choosing $k_\nu$ sufficiently small, one can ensure 
  that \eqref{eq:cond-knu} is satisfied: Since all three functions 
  $l_g^1$, $l_g^2$, $l_g^3$ are increasing, it follows from 
  \eqref{eq:def-Mnu} that, by decreasing $k_\nu$, one decreases both factors 
  on the left-hand side of \eqref{eq:cond-knu}.

  Now, the goal is to apply Th.\ \ref{th:opzeropar} with 
  $\tau=\mathbb R^+_0$ and 
  $\mathcal H_\nu(\tilde{\vec u},\cdot)$ playing the role of $\mathcal H$. 
  To that end, in the following, one defines continuous functions 
  $\myb$, $\tildeh$, $\tildeg$, 
  as well as numbers $\tilde m,\tilde M\in\mathbb R^+_0$, 
  $\beta_i\in\mathbb R^-_0$, $B_{\nu,i}\in\mathbb R^+_0$, 
  $C_{b,\nu,i}\in\mathbb R^+$, $L_{g,\nu,i}(\tilde{\vec u})\in\mathbb R^+$, 
  and $C_{\tilde h,\nu,i}(\tilde{\vec u})\in\mathbb R^+$
  that satisfy the hypotheses of Th.\ \ref{th:opzeropar} 
  (where the
  quantities with index $\nu$ correspond to the matching quantities
  without index $\nu$ in Th.\ \ref{th:opzeropar}). 
  Condition \eqref{eq:cond-knu} will {\em only} be needed to prove
  hypothesis \eqref{item:lglesslhpar} of Th.\ \ref{th:opzeropar}. 

  For each $i\in I$, let
  \begin{subequations}
    \label{subeq:funcsforlem}
    {\allowdisplaybreaks
    \begin{align}
      \label{eq:deftildebnu}
      \myb&:\,\mathbb R^+_0\longrightarrow\mathbb R^+_0,\q\q\q
      \myb(\theta):=k_\nu^{-1}\sum_{m\in\{{\rm s,g}\}}
      \ep_m(\theta)\,\lambda_3(\omega_{m,i}),\\[2mm]
      \label{eq:def-Lkappai}
      L_{\kappa,i} & := \sum_{m\in\{{\rm s,g}\}}
      \kappa_m\,\sum_{j\in\nb_m(i)}
      \frac{\lambda_{2}
        \big(\partial \omega_{m,i}\cap\partial\omega_{m,j}\big)}{\|x_i-x_j\|_2}
      \ge 0,
      \\[2mm]
      C_{\vec V,i}  &:= 
      \sigma\,\lambda_{2}\big(\partial\omega_{{\rm s},i}\cap\Gamma_{\Omega}\big)
      + \sigma\,\lambda_{2}\big(\partial\omega_{{\rm s},i}
      \cap(\partial\Omega\setminus\Gamma_{\Omega})\big)
      + \sigma\,\lambda_{2}(\omega_i\cap\Sigma)  \ge 0, 
      \label{eq:def-CVnui}
      \\[2mm]
      \tildeh&:\,\mathbb R^+_0\longrightarrow\mathbb R,
      \nonumber\\*
      \tildeh(\theta)&:=
      L_{\kappa,i}\,\theta
      +
      C_{\vec V,i}\,
      \big(
      \epsilon(0)
      +\var^+\epsilon(\theta)
      \big)\,\theta^4\, 
      \nonumber\\*
      &\q
      +\sigma\,\big(\var^-\epsilon(\theta)-\epsilon(0)\big)\,\theta_{\rm ext}^4\,
      \lambda_{2}\big(\partial\omega_{{\rm s},i}\cap(\partial\Omega\setminus\Gamma_{\Omega})\big),
      \label{eq:deftildehnu}
      \\[2mm] 
      \tildeg&:\,(\mathbb R^+_0)^I\longrightarrow\mathbb R^+_0,\nonumber\\*
      \tildeg(\vec u)&:=\sum_{m\in\{{\rm s,g}\}}
      \kappa_m\,\sum_{j\in\nb_m(i)}
      \frac{u_j}{\|x_i-x_j\|_2}\,
      \lambda_{2}(\partial \omega_{m,i}\cap\partial \omega_{m,j})
      \nonumber\\*[2mm]
      &\q+C_{\vec V,i}\,\var^-\epsilon(u_i)\,u_i^4
      \nonumber\\*[2mm]
      &\q
      +\sum_{\alpha\in J_{\Omega,i}}V_{\Gamma,\alpha}(\vec u\res{I_\Omega})
      +\sigma\,\var^+\epsilon(u_i)\,\theta_{\rm ext}^4\,
      \lambda_{2}\big(\partial\omega_{{\rm s},i}\cap(\partial\Omega\setminus\Gamma_{\Omega})\big)
      \nonumber\\*[0.5mm]
      &\q+\sum_{\alpha\in J_{\Sigma,i}}V_{\Sigma,\alpha}(\vec u\res{I_\Sigma})
      +\sum_{m\in\{{\rm s,g}\}}f_{m,\nu,i}\,\lambda_3(\omega_{m,i}),
      \label{eq:deftildegnu}\\[2mm]
      \tilde m&:=\tilde M:=\theta_{\rm ext},\q\q
      \beta_i:=0,\q\q
      B_{\nu,i}:=\sum_{m\in\{{\rm s,g}\}}f_{m,\nu,i}\,\lambda_3(\omega_{m,i}),
      \label{eq:defmM}\\[2mm]
      C_{b,\nu,i}&:=k_\nu^{-1}\,C_\ep\,\lambda_3(\omega_i) > 0,
      \label{eq:defcbnui}\\[2mm]
      \label{eq:def-Lgnui}
      L_{g,\nu,i}(\tilde{\vec u}) &:=  
      L_{\kappa,i}
      +C_{\vec V,i}\,\,l_g^1\big(M_\nu(\tilde{\vec u})\big)
      +
      \sigma\,
      \big(
      \lambda_2(\partial\omega_{{\rm s},i}\cap\Gamma_{\Omega})
      +\lambda_2(\omega_i\cap\Sigma)
      \big)
      \,l_g^2\big(M_\nu(\tilde{\vec u})\big)
      \nonumber\\*
      &\q
      +\sigma\,\lambda_{2}\big(\partial\omega_{{\rm s},i}\cap(\partial\Omega\setminus\Gamma_{\Omega})\big)\,
      l_g^3\big(M_\nu(\tilde{\vec u})\big)
      \ge 0,
      \\[2mm]
      \label{eq:def-Chnui}
      C_{h,\nu,i}(\tilde{\vec u}) &:= C_{b,\nu,i} + 
      L_{\kappa,i} + 4\, m(\tilde{\vec u})^3\,\epsilon(0)\,C_{\vec V,i}> 0. 
    \end{align}}%
  \end{subequations}
  Note that the numbers $m(\tilde{\vec u})$ and $M_\nu(\tilde{\vec u})$
  defined in \eqref{eq:def-mnu} and \eqref{eq:def-Mnu}, respectively, 
  correspond to the numbers $m(\tilde{\vec u})$ and 
  $M(\tilde{\vec u})$ as defined in \eqref{eq:mtiludef} in Th.\ \ref{th:opzeropar}. 

  It remains to verify the hypotheses 
  \eqref{item:hhgpar} -- \eqref{item:lglesslhpar} of Th.\ \ref{th:opzeropar}. 

  Th.\ \ref{th:opzeropar}\eqref{item:hhgpar}: 
  Showing $\mch(\tilde{\vec u},\vec u)
  =\myb(u_i)+\tildeh(u_i)-\myb({\tilde u}_i)-\tildeg(\vec u)$ 
  is straightforward from the respective definitions in \eqref{subeq:funcsforlem}.

  Th.\ \ref{th:opzeropar}\eqref{item:bfmtilde}: 
  One has to show that, for each $i\in I$, 
  $\vec u\in(\mathbb R^+_0)^I$,
  $\theta\in\mathbb R^+_0$:
  \begin{subequations}
    \label{subeq:bfmtildepr}
    \begin{align}
      \label{eq:bfmtildemaxpr}
      &\max\big\{\vecmax{\vec u},\,\theta_{\rm ext}\big\}\leq\theta& 
      &\Rightarrow&
      &\tildeg(\vec u)-\tildeh(\theta)
      \leq B_{\nu,i},\\
      &\theta\leq\min\big\{\theta_{\rm ext},\,\vecmin{\vec u}\big\}& 
      &\Rightarrow&
      &\tildeg(\vec u)-\tildeh(\theta)\geq 0.
      \label{eq:bfmtildeminpr}
    \end{align}
  \end{subequations}
  Considering 
  Lem.\ \ref{lem:nonnegrad}\eqref{item:nonnegradzero} 
  and Rem.\ \ref{rem:ii}, one sees that 
  \begin{align*}
    \sum_{\alpha\in J_{\Omega,i}}V_{\Gamma,\alpha}(\vec u\res{I_\Omega})
    &\leq
    \sigma\,\epsilon(u_i)
    \max\big\{\vecmax{\vec u}^4,\,\theta_{\rm ext}^4\big\}\,
    \lambda_2(\partial \omega_{s,i} \cap \Gamma_\Omega ),
    \\ 
    \sum_{\alpha\in J_{\Sigma,i}}
    V_{\Sigma,\alpha}(\vec u\res{I_\Sigma})
    &\leq
    \sigma\,\epsilon(u_i)\, \vecmax{\vec u}^4 \,
    \lambda_2(\omega_i \cap \Sigma).
  \end{align*}
  If $\theta\geq\theta_{\rm ext}$ and $\theta\geq\vecmax{\vec u}$, then,
  noting that $\epsilon(\theta)+\var^-\epsilon(\theta)=\epsilon(0)+\var^+\epsilon(\theta)$,
  and, by recalling 
  \eqref{eq:def-Bf-nu} 
  and \eqref{eq:def-Lkappai} -- \eqref{eq:defmM}, one obtains 
  \begin{align*}
    \tildeg(\vec u) \le  \mbox{} &
    \sum_{m\in\{{\rm s,g}\}}
    \kappa_m\,\sum_{j\in\nb_m(i)}
    \frac{\theta}{\|x_i-x_j\|_2}\,
    \lambda_{2}\big(\partial \omega_{m,i}\cap\partial \omega_{m,j}\big)
    \nonumber\\[1mm]
    &
    +C_{\vec V,i}\,\var^-\epsilon(\theta)\,\theta^4
    \nonumber\\
    &
    +\sigma\,\epsilon(\theta)\,\theta^4\,
    \lambda_2(\partial \omega_{s,i} \cap \Gamma_\Omega)
    \nonumber\\
    &
    +\sigma\,\epsilon(\theta)\,(\theta^4-\theta_{\rm ext}^4)\,
    \lambda_{2}\big(\partial\omega_{{\rm s},i}\cap(\partial\Omega\setminus\Gamma_{\Omega})\big)
    \nonumber\\
    &
    +\sigma\,\var^+\epsilon(\theta)\,\theta_{\rm ext}^4\,
    \lambda_{2}\big(\partial\omega_{{\rm s},i}\cap(\partial\Omega\setminus\Gamma_{\Omega})\big)
    \nonumber\\[0.5mm]
    &
    +\sigma\,\epsilon(\theta)\, \theta^4 \,
    \lambda_2(\omega_i \cap \Sigma)
    +\sum_{m\in\{{\rm s,g}\}}f_{m,\nu,i}\,\lambda_3(\omega_{m,i})
    \nonumber \\
    = \mbox{} & 
    \theta\,L_{\kappa,i} 
    +
    C_{\vec V,i}\,
    \big(
    \epsilon(0)
    +\var^+\epsilon(\theta)
    \big)\,\theta^4
    \nonumber\\
    &
    +\sigma\,\big(\var^-\epsilon(\theta)-\epsilon(0)\big)\,\theta_{\rm ext}^4\,
    \lambda_{2}\big(\partial\omega_{{\rm s},i}\cap(\partial\Omega\setminus\Gamma_{\Omega})\big)
    + \sum_{m\in\{{\rm s,g}\}}f_{m,\nu,i}\,\lambda_3(\omega_{m,i})\nonumber\\
    = \mbox{} &
    \tildeh(\theta) + B_{\nu,i},
  \end{align*}
  proving \eqref{eq:bfmtildemaxpr}. On the other hand, if 
  $\theta\leq\theta_{\rm ext}$ and $\theta\leq\vecmin{\vec u}$, 
  then, as $f_{m,\nu,i} \ge 0$ by \aaref{item:fapprox}, an analogous computation shows 
  $\tildeg(\vec u) \ge \tildeh(\theta)$, proving \eqref{eq:bfmtildeminpr}. 

  Th.\ \ref{th:opzeropar}\eqref{item:biinvlip}: That, 
  for $\theta_2\geq\theta_1\geq 0$, one has
  $\myb(\theta_2)\geq(\theta_2-\theta_1)\,C_{b,\nu,i}+\myb(\theta_1)$
  is immediate from combining \aref{item:ae}, 
  \eqref{eq:deftildebnu}, and \eqref{eq:defcbnui}. 

  Th.\ \ref{th:opzeropar}\eqref{item:tildegilip}: 
  For each $i \in I$, 
  one has to show that $\tildeg$ 
  is $L_{g,\nu,i}(\tilde{\vec u})$-Lipschitz with respect 
  to the max-norm on $[m(\tilde{\vec u}),M_\nu(\tilde{\vec u})]^I$. 
  The function
  \begin{equation*}
    \vec u\mapsto\sum_{m\in\{{\rm s,g}\}}
    \kappa_m\,\sum_{j\in\nb_m(i)}
    \frac{u_j}{\|x_i-x_j\|_2}\,
    \lambda_{2}(\partial\omega_{m,i}\cap\partial\omega_{m,j})\q\q
    \big(\vec u\in(\mathbb R^+_0)^I\big)
  \end{equation*}
  is $L_{\kappa,i}$-Lipschitz, $L_{\kappa,i}$ 
  according to \eqref{eq:def-Lkappai}, and,
  using \eqref{eq:def-lgone}, the map
  \begin{equation*}
    \vec u
    \mapsto
    C_{\vec V,i}\,\var^-\epsilon(u_i)\,u_i^4
    \q\q
    \big(\vec u\in[0,M_\nu(\tilde{\vec u})]^I\big)
  \end{equation*}
  is $\big(C_{\vec V,i}\,\,l_g^1\big(M_\nu(\tilde{\vec u})\big)\big)$-Lipschitz by \aref{item:em}.
  Furthermore, Lem.\ \ref{lem:nonnegrad}\eqref{item:nonnegradb} 
  and Rem.\ \ref{rem:ii}
  show that the function 
  $\sum_{\alpha \in J_{\Omega,i}} V_{\Gamma,\alpha}$ 
  is 
  $\big(\sigma\,\lambda_2(\partial\omega_{{\rm s},i}\cap\Gamma_{\Omega})\,
  l_g^2\big(M_\nu(\tilde{\vec u})\big)\big)$-Lip\-schitz on 
  $[0,M_\nu(\tilde{\vec u})]^{I_\Omega}$ and
  that the function 
  $\sum_{\alpha \in J_{\Sigma,i}} V_{\Sigma,\alpha}$ is 
  $\big(\sigma\,\lambda_2(\omega_i\cap\Sigma)\,l_g^2\big(M_\nu(\tilde{\vec u})\big)\big)$-Lip\-schitz  on 
  $[0,M_\nu(\tilde{\vec u})]^{I_\Sigma}$.
  Finally, combining \eqref{eq:def-lgthree} with \aref{item:em} 
  yields that $\var^+\epsilon\,\theta_{\rm ext}^4$ is 
  $l_g^3\big(M_\nu(\tilde{\vec u})\big)$-Lip\-schitz,
  such that, by \eqref{eq:deftildegnu} and \eqref{eq:def-Lgnui},
  $\tildeg$ is $L_{g,\nu,i}(\tilde{\vec u})$-Lipschitz 
  on $[m(\tilde{\vec u}),M_\nu(\tilde{\vec u})]^I$ as needed. 

  Th.\ \ref{th:opzeropar}\eqref{item:tildehinvlip}: 
  Let $i\in I$ and $M_\nu(\tilde{\vec u}) \ge \theta_2 \ge \theta_1 \ge m(\tilde{\vec u})$.
  One needs to show that 
  $\tildeh(\theta_2)
  \geq (\theta_2-\theta_1)\,\big(L_{\kappa,i} + 4\, m(\tilde{\vec u})^3\,\epsilon(0)\,C_{\vec V,i}\big)
  +\tildeh(\theta_1)$.
  Since $ \theta \mapsto \theta^4$ is a convex function on $\mathbb R^+_0$,
  one has $\theta^4_2 \ge 4\, m(\tilde{\vec u})^3\, (\theta_2 - \theta_1) + \theta_1^4$. 
  As $\var^+\epsilon$ and $\var^-\epsilon$ are increasing, \eqref{eq:deftildehnu} yields
  \begin{align*}
    \tildeh(\theta_2)
    &\ge
    (\theta_2 - \theta_1) 
    \Big(
    L_{\kappa,i} + 
    4\, m(\tilde{\vec u})^3\,\sigma\,\epsilon(0)\,
    \Big(
    \lambda_{2}(\partial\omega_{{\rm s},i}\cap\Gamma_{\Omega})\\[-8pt]
    &\hspace{159pt}
    +
    \lambda_{2}\big(\partial\omega_{{\rm s},i}
    \cap(\partial\Omega\setminus\Gamma_{\Omega})\big)
    +
    \lambda_{2}(\omega_i\cap\Sigma)
    \Big)
    \Big)\\
    &\q + \theta_1 L_{\kappa,i} 
    +\sigma\,
    \big(
    \epsilon(0)
    +\var^+\epsilon(\theta_1)
    \big)\,\theta_1^4\, 
    \lambda_{2}(\partial\omega_{{\rm s},i}\cap\Gamma_{\Omega})
    \\
    &\q
    + \sigma\,
    \big(
    \epsilon(0)
    +\var^+\epsilon(\theta_1)
    \big)\,\theta_1^4\,
    \lambda_{2}\big(\partial\omega_{{\rm s},i}
    \cap(\partial\Omega\setminus\Gamma_{\Omega})\big)\\
    &\q
    + \sigma\,
    \big(
    \epsilon(0)
    +\var^+\epsilon(\theta_1)
    \big)\,\theta_1^4\, 
    \lambda_{2}(\omega_i\cap\Sigma)\\
    &\q
    +\sigma\,\big(\var^-\epsilon(\theta_1)-\epsilon(0)\big)\,\theta_{\rm ext}^4\,
    \lambda_{2}\big(\partial\omega_{{\rm s},i}\cap(\partial\Omega\setminus\Gamma_{\Omega})\big)\\
    &=
    (\theta_2 - \theta_1) 
    \big(L_{\kappa,i}+ 4\, m(\tilde{\vec u})^3\,\epsilon(0)\,C_{\vec V,i}\big)
    +
    \tildeh(\theta_1),
  \end{align*}
  thereby establishing the case. 

  Th.\ \ref{th:opzeropar}\eqref{item:lglesslhpar}: 
  For each $i \in I$, one has to show that
  $L_{g,\nu,i}(\tilde{\vec u})<C_{h,\nu,i}(\tilde{\vec u})$, where
  $L_{g,\nu,i}(\tilde{\vec u})$ and $C_{h,\nu,i}(\tilde{\vec u})$
  are according to \eqref{eq:def-Lgnui} and \eqref{eq:def-Chnui}, respectively.
  The desired inequality follows from \eqref{eq:cond-knu} by the
  following calculation:
  \begin{eqnarray*}
    L_{g,\nu,i}(\tilde{\vec u})
    &=&
    L_{\kappa,i}
    +C_{\vec V,i}\,\,l_g^1\big(M_\nu(\tilde{\vec u})\big)\\
    & &
    +
    \sigma\,
    \big(
    \lambda_2(\partial\omega_{{\rm s},i}\cap\Gamma_{\Omega})
    +\lambda_2(\omega_i\cap\Sigma)
    \big)
    \,l_g^2\big(M_\nu(\tilde{\vec u})\big)
    \\
    & &
    +\sigma\,\lambda_{2}\big(\partial\omega_{{\rm s},i}\cap(\partial\Omega\setminus\Gamma_{\Omega})\big)\,
    l_g^3\big(M_\nu(\tilde{\vec u})\big)
    \\
    &\overset{\text{\eqref{eq:def-LV}-\eqref{eq:def-LVsigma}}}\le&
    L_{\kappa,i}
    +\lambda_3(\omega_i)\,(L_{\vec V,\Gamma_{\Omega}}+L_{\vec V,\partial\Omega\setminus\Gamma_{\Omega}}+L_{\vec V,\Sigma})
    \,l_g^1\big(M_\nu(\tilde{\vec u})\big)
    \\
    & &
    +
    \lambda_3(\omega_i)\,(L_{\vec V,\Gamma_{\Omega}} + L_{\vec V,\Sigma})\,l_g^2\big(M_\nu(\tilde{\vec u})\big)
    \\
    & &
    +\lambda_3(\omega_i)\,L_{\vec V,\partial\Omega\setminus\Gamma_{\Omega}}\,
    l_g^3\big(M_\nu(\tilde{\vec u})\big)+ 4\, m(\tilde{\vec u})^3\,\epsilon(0)\,C_{\vec V,i} 
    \\
    &\overset{\eqref{eq:cond-knu}}<&
    k_\nu^{-1}\,C_\ep\,\lambda_3(\omega_i) + 
    L_{\kappa,i} + 4\, m(\tilde{\vec u})^3\,\epsilon(0)\,C_{\vec V,i} 
    \\
    &=&
    C_{b,\nu,i} + 
    L_{\kappa,i} + 4\, m(\tilde{\vec u})^3\,\epsilon(0)\,C_{\vec V,i} 
    \\
    &=&
    C_{h,\nu,i}(\tilde{\vec u}). 
  \end{eqnarray*}

  Hence, all hypotheses of Th.\ \ref{th:opzeropar} are verified, and 
  the conclusion of
  Th.\ \ref{th:opzeropar} provides a unique vector 
  $\vec u_\nu\in[m(\tilde{\vec u}),M_\nu(\tilde{\vec u})]^{I}$
  such that $\mathcal H_{\nu,i}(\tilde{\vec u},\vec u_\nu)=0$
  for each $i\in I$. 
  Since Th.\ \ref{th:opzeropar} also yields that 
  $\vec u_\nu$ is the only element of $(\mathbb R^+_0)^I$ satisfying 
  $\mathcal H_{\nu,i}(\tilde{\vec u},\vec u_\nu)=0$
  for each $i\in I$, the proof of Th.\ \ref{th:sol} is complete. 
\end{proof}
\begin{corollary}
  \label{cor:soln}
  Assume \emaref{item:aomega} -- \emaref{item:initial}, 
  \emdaref{item:spdis} -- \emdaref{eq:zetaomega}, 
  \emaaref{item:fapprox}, \emaaref{item:initapprox},
  and let $(\vec u_0,\dots,\vec u_{n-1})$, 
  $n\leq N$, $\vec u_\nu=(u_{\nu,i})_{i\in I}$,
  be a nonnegative solution to \eqref{subeq:heatopeq}
  (where $N$ is replaced by $n-1$). 
  Then each solution
  $\vec u_{n}\in\left( \mathbb R^+_0\right)^I$ to
  $\mathcal H_{n,i}(\vec u_{n-1},\vec u_n)=0$ (for each $i\in I$), 
  where $\mathcal H_{n,i}$ is defined by \eqref{subeq:heatconsolfvol2},  
  must lie in $[m(\vec u_{n-1}),M_n(\vec u_{n-1})]^I$, with 
  $m(\vec u_{n-1})$ and $M_n(\vec u_{n-1})$ defined 
  according to \eqref{eq:def-mnu} and \eqref{eq:def-Mnu}, respectively. 
  Furthermore, if $k_n$ satisfies condition \eqref{eq:cond-knu}, then 
  there is a unique 
  $\vec u_{n}\in\left( \mathbb R^+_0\right)^I$ that satisfies
  $\mathcal H_{n,i}(\vec u_{n-1},\vec u_n)=0$
  for each $i\in I$. 
\end{corollary}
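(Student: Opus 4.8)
The plan is to obtain Cor.\ \ref{cor:soln} as an immediate specialization of Th.\ \ref{th:sol}, taking $\tilde{\vec u} := \vec u_{n-1}$ and $\nu := n$. The key observation is that the hypothesis of Cor.\ \ref{cor:soln}---namely that $(\vec u_0,\dots,\vec u_{n-1})$ is a \emph{nonnegative} solution to \eqref{subeq:heatopeq}---guarantees in particular that $\vec u_{n-1}\in(\mathbb R^+_0)^I$, so that $\vec u_{n-1}$ is an admissible choice for the arbitrary vector $\tilde{\vec u}\in(\mathbb R^+_0)^I$ appearing in Th.\ \ref{th:sol}. Since $n\le N$, one has $n\in\{1,\dots,N\}$, and the operator $\mathcal H_{n,i}$ of Cor.\ \ref{cor:soln} is precisely $\mathcal H_{\nu,i}(\tilde{\vec u},\cdot)$ with $\nu=n$ and $\tilde{\vec u}=\vec u_{n-1}$. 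The assumptions invoked in the corollary coincide with those required by Th.\ \ref{th:sol}, so nothing further needs to be checked before applying it.

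First I would invoke the maximum-principle part of Th.\ \ref{th:sol}: for the choice $\tilde{\vec u}=\vec u_{n-1}$, each solution $\vec u_n\in(\mathbb R^+_0)^I$ of $\mathcal H_{n,i}(\vec u_{n-1},\vec u_n)=0$ (for all $i\in I$) must lie in $[m(\vec u_{n-1}),M_n(\vec u_{n-1})]^I$, where $m(\vec u_{n-1})$ and $M_n(\vec u_{n-1})$ are given by \eqref{eq:def-mnu} and \eqref{eq:def-Mnu}. This yields the first assertion of the corollary verbatim.

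For the existence and uniqueness statement, I would assume that $k_n$ satisfies \eqref{eq:cond-knu} and again apply Th.\ \ref{th:sol} (with $\nu=n$, $\tilde{\vec u}=\vec u_{n-1}$). Th.\ \ref{th:sol} then furnishes a unique $\vec u_n\in[m(\vec u_{n-1}),M_n(\vec u_{n-1})]^I$ satisfying $\mathcal H_{n,i}(\vec u_{n-1},\vec u_n)=0$ for each $i\in I$. To upgrade uniqueness from the box to all of $(\mathbb R^+_0)^I$---as claimed in the corollary---I would combine this with the maximum principle already established: any solution in $(\mathbb R^+_0)^I$ is forced into the box $[m(\vec u_{n-1}),M_n(\vec u_{n-1})]^I$, and inside the box the solution is unique. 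Equivalently, this is exactly the uniqueness over $(\mathbb R^+_0)^I$ recorded at the end of the proof of Th.\ \ref{th:sol}.

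I do not anticipate a genuine obstacle here: the content of the corollary is entirely contained in Th.\ \ref{th:sol}, the local-in-time result. The only points requiring (routine) care are the harmless verification that the given nonnegative $\vec u_{n-1}$ serves as the free parameter $\tilde{\vec u}$, and the passage from box-uniqueness to uniqueness on $(\mathbb R^+_0)^I$ via the maximum principle. The substantive work---verifying the hypotheses of the abstract root problem Th.\ \ref{th:opzeropar}, in particular the Lipschitz estimates of Lem.\ \ref{lem:nonnegrad}\eqref{item:nonnegradb} and the smallness condition \eqref{eq:cond-knu}---has already been carried out in the proof of Th.\ \ref{th:sol}, so the corollary itself is a one-step deduction.
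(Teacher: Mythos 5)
Your proposal is correct and coincides with the paper's own treatment: the paper gives no separate proof of Cor.\ \ref{cor:soln}, presenting it (in the paragraph preceding Th.\ \ref{th:sol}) as a direct consequence of Th.\ \ref{th:sol} with $\tilde{\vec u}=\vec u_{n-1}$, exactly as you do. Your added remark that uniqueness over all of $(\mathbb R^+_0)^I$ follows by combining box-uniqueness with the maximum principle is also precisely what the final sentence of the proof of Th.\ \ref{th:sol} records.
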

\begin{corollary}
  \label{cor:solbigTnew}
  Assume \emaref{item:aomega} -- \emaref{item:initial}, 
  \emdaref{item:spdis} -- \emdaref{eq:zetaomega}, 
  \emaaref{item:fapprox} and \emaaref{item:initapprox}.
  Let 
  \begin{equation}
    \label{eq:cormdefbigT}
    m:=\min\big\{\theta_{\rm ext},\,\essinf(\theta_{\rm init})\big\},
  \end{equation}
  \begin{equation}
    \label{eq:cormdefbigTmnu}
    M_\nu:= \max \left\{  \theta_{\rm ext},\, 
      \|\theta_{\rm init}\|_{L^\infty(\Omega)} \right\}
      + \frac{t_\nu}{C_\ep} 
      \sum_{m\in\{{\rm s,g}\}} \|f_m \|_{ L^\infty(]0,t_\nu[\times\Omega_m)}
  \end{equation}
  for each $\nu\in\{0,\dots,N\}$. 

  If $(\vec u_0,\dots,\vec u_N)
  =(u_{\nu,i})_{(\nu,i)\in\{0,\dots,N\}\times I}$
  $\in(\mathbb R^+_0)^{I\times\{0,\dots,N\}}$ is a solution to the
  finite volume scheme \eqref{subeq:heatopeq}, then 
  $\vec u_\nu\in[m,M_\nu]^I$ for each $\nu\in\{0,\dots,N\}$. Furthermore, if 
  \begin{align}
    \mbox{}&
    k_\nu\,\Big((L_{\vec V,\Gamma_{\Omega}}+L_{\vec V,\partial\Omega\setminus\Gamma_{\Omega}}+L_{\vec V,\Sigma})
    \,l_g^1(M_\nu)
    +
    (L_{\vec V,\Gamma_{\Omega}} + L_{\vec V,\Sigma})\,l_g^2(M_\nu)
    \nonumber\\
    &\q\;
    +L_{\vec V,\partial\Omega\setminus\Gamma_{\Omega}}\,l_g^3(M_\nu)\Big)
    \;<\; C_\ep\q\q
    \big(\nu\in\{1,\dots,N\}\big),
    \label{eq:cond-knu2bigT}
  \end{align}
  where $L_{\vec V,\Gamma_{\Omega}}$, 
  $L_{\vec V,\partial\Omega\setminus\Gamma_{\Omega}}$,
  $L_{\vec V,\Sigma}$, 
  $l_g^1$, $l_g^2$, and $l_g^3$ are
  defined according to \eqref{subeq:thsolconstdef} and \eqref{subeq:lgdef}, respectively, 
  then the finite volume scheme \eqref{subeq:heatopeq} has a 
  unique solution 
  $(\vec u_0,\dots,\vec u_N)\in(\mathbb R^+_0)^{I\times\{0,\dots,N\}}$. 
  It is pointed out that a sufficient condition for \eqref{eq:cond-knu2bigT} to
  be satisfied is
  \begin{align}
    \mbox{}&
    \max\big\{k_\nu:\,\nu\in\{1,\dots,N\}\big\}\,
    \Big((L_{\vec V,\Gamma_{\Omega}}+L_{\vec V,\partial\Omega\setminus\Gamma_{\Omega}}+L_{\vec V,\Sigma})
    \,l_g^1(M_N)
    \nonumber\\
    &\hspace{126pt}
    +
    (L_{\vec V,\Gamma_{\Omega}} + L_{\vec V,\Sigma})\,l_g^2(M_N)
    \nonumber\\
    &\hspace{126pt}
    +L_{\vec V,\partial\Omega\setminus\Gamma_{\Omega}}\,l_g^3(M_N)\Big)
    \;<\; C_\ep.
    \label{eq:cond-knu2bigTN}
  \end{align}
\end{corollary}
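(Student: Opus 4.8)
The plan is to induct on $\nu\in\{0,\dots,N\}$, using Cor.\ \ref{cor:soln} as the single-step engine and comparing the step-local bounds $m(\vec u_{\nu-1})$, $M_\nu(\vec u_{\nu-1})$ of Th.\ \ref{th:sol} against the global bounds $m$, $M_\nu$ of \eqref{eq:cormdefbigT}--\eqref{eq:cormdefbigTmnu}. For the maximum principle, the base case $\nu=0$ is immediate: by \aaref{item:initapprox} each $u_{0,i}=\theta_{{\rm init},i}$ lies in $[\essinf(\theta_{\rm init}\res{\omega_i}),\|\theta_{\rm init}\|_{L^\infty(\omega_i)}]\subseteq[\essinf(\theta_{\rm init}),\|\theta_{\rm init}\|_{L^\infty(\Omega)}]\subseteq[m,M_0]$, since $t_0=0$ forces $M_0=\max\{\theta_{\rm ext},\|\theta_{\rm init}\|_{L^\infty(\Omega)}\}$.

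For the inductive step of the maximum principle, assume $\vec u_{\nu-1}\in[m,M_{\nu-1}]^I$. Cor.\ \ref{cor:soln} already places any solution $\vec u_\nu$ in $[m(\vec u_{\nu-1}),M_\nu(\vec u_{\nu-1})]^I$, so it suffices to verify $[m(\vec u_{\nu-1}),M_\nu(\vec u_{\nu-1})]\subseteq[m,M_\nu]$. The lower bound is easy: $m(\vec u_{\nu-1})=\min\{\theta_{\rm ext},\vecmin{\vec u_{\nu-1}}\}\ge\min\{\theta_{\rm ext},m\}=m$. The crux is the upper bound. If the maximum defining $M_\nu(\vec u_{\nu-1})$ is attained at $\theta_{\rm ext}$, then $M_\nu(\vec u_{\nu-1})=\theta_{\rm ext}\le M_\nu$ trivially; otherwise, writing $A:=\max\{\theta_{\rm ext},\|\theta_{\rm init}\|_{L^\infty(\Omega)}\}$ and $S_\nu:=\sum_{m\in\{{\rm s,g}\}}\|f_m\|_{L^\infty(]0,t_\nu[\times\Omega_m)}$, the hypothesis $\vecmax{\vec u_{\nu-1}}\le M_{\nu-1}$ gives
\[
M_\nu(\vec u_{\nu-1})
=\vecmax{\vec u_{\nu-1}}+\tfrac{k_\nu}{C_\ep}\,B_{f,\nu}
\le A+\tfrac{t_{\nu-1}}{C_\ep}\,S_{\nu-1}+\tfrac{k_\nu}{C_\ep}\,B_{f,\nu}.
\]
The key elementary estimate is $B_{f,\nu}\le S_\nu$: by \aaref{item:fapprox} one has $f_{m,\nu,i}\le\|f_m\|_{L^\infty(]0,t_\nu[\times\Omega_m)}$, and since $\sum_{m}\lambda_3(\omega_{m,i})/\lambda_3(\omega_i)\le 1$, each entry in the maximum \eqref{eq:def-Bf-nu} is bounded by $S_\nu$. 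Combining this with the monotonicity $S_{\nu-1}\le S_\nu$ and the identity $t_{\nu-1}+k_\nu=t_\nu$ telescopes the right-hand side to $A+\tfrac{t_\nu}{C_\ep}S_\nu=M_\nu$, completing the maximum principle.

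For existence and uniqueness I again induct on $\nu$, starting from the uniquely prescribed $\vec u_0$ of \eqref{eq:heatopeqin}. Given the unique nonnegative $\vec u_{\nu-1}\in[m,M_{\nu-1}]^I$, I invoke the existence-uniqueness half of Cor.\ \ref{cor:soln}, which needs the step condition \eqref{eq:cond-knu} evaluated at $\tilde{\vec u}=\vec u_{\nu-1}$. Since $l_g^1,l_g^2,l_g^3$ are increasing and the previous paragraph gives $M_\nu(\vec u_{\nu-1})\le M_\nu$, the left-hand side of \eqref{eq:cond-knu} is dominated by the left-hand side of the assumed \eqref{eq:cond-knu2bigT}; hence \eqref{eq:cond-knu2bigT} implies \eqref{eq:cond-knu} at step $\nu$, and Cor.\ \ref{cor:soln} produces a unique nonnegative $\vec u_\nu$ (unique in all of $(\mathbb R^+_0)^I$, not merely in the box). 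This closes the induction and yields the unique full solution, while the maximum principle above places each $\vec u_\nu$ in $[m,M_\nu]^I$.

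Finally, that \eqref{eq:cond-knu2bigTN} suffices for \eqref{eq:cond-knu2bigT} follows from monotonicity in $\nu$: $M_\nu$ is increasing in $\nu$ (both $t_\nu$ and $S_\nu$ increase), so $l_g^j(M_\nu)\le l_g^j(M_N)$ for $j\in\{1,2,3\}$, while $k_\nu\le\max\{k_\mu:\mu\in\{1,\dots,N\}\}$; replacing these in \eqref{eq:cond-knu2bigT} by their $N$-indexed counterparts only enlarges the left-hand side, so \eqref{eq:cond-knu2bigTN} forces \eqref{eq:cond-knu2bigT} for every $\nu$. The only genuinely non-bookkeeping point is the telescoping upper bound $M_\nu(\vec u_{\nu-1})\le M_\nu$; everything else reduces to monotonicity of $l_g^j$ and of $M_\nu$, together with the single-step results already established.
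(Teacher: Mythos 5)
Your proposal is correct and follows essentially the same route the paper intends: the paper's proof is simply a pointer to the induction of \cite[Th.\ 4.3]{KP-05}, and your argument fills in exactly that induction — base case from \aaref{item:initapprox}, single-step maximum principle and solvability from Cor.\ \ref{cor:soln}, the key comparison $M_\nu(\vec u_{\nu-1})\leq M_\nu$ via $B_{f,\nu}\leq S_\nu$ and $t_{\nu-1}+k_\nu=t_\nu$, and monotonicity of $l_g^1,l_g^2,l_g^3$ to pass from \eqref{eq:cond-knu2bigT} to \eqref{eq:cond-knu} and from \eqref{eq:cond-knu2bigTN} to \eqref{eq:cond-knu2bigT}. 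No gaps.
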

\begin{proof}
  The proof can be carried out by induction on $n\in\{0,\dots,N\}$
  analogous to the proof of \cite[Th.\ 4.3]{KP-05}.
\end{proof} 


\newcommand{\etalchar}[1]{$^{#1}$}
\providecommand{\bysame}{\leavevmode\hbox to3em{\hrulefill}\thinspace}

\end{document}